\theoremstyle{plain}
\newtheorem{theorem}{Theorem}[section]
\newtheorem{lemma}[theorem]{Lemma}
\theoremstyle{definition}
\newtheorem{definition}[theorem]{Definition}
\let\Section=\section
\def\section{\setcounter{equation}{0}\Section}
\newcommand{\R}{\mathbb{R}}
\def\RR{\mathbb{R}}
\def\EE{\mathbb{E}}
\def \eref#1{\hbox{(\ref{#1})}}
\newcommand{\Ceil}[1]{\left\lceil #1 \right\rceil}
\newcommand{\ud}{\ensuremath{\mathrm{d}}}
\newenvironment{keywords}{\small\begin{quote}{\textbf{Keywords:}}\,\,}{\end{quote}}
\begin{document}

\title{Some stochastic time-fractional diffusion equations with variable coefficients and time dependent noise} 

\author{   {\sc Guannan Hu}}
%\date{June 28, 2013}
%\date{\today}
\date{}
\maketitle

\begin{abstract}
We prove the existence and uniqueness of mild solution for the stochastic partial differential equation  $$\left(\partial^\alpha   - \textit{B} \right) u(t,x)= u(t,x) \cdot \dot{W}(t,x),$$  where $$\alpha \in (1/2, 1)\cup(1, 2);$$ $\textit{B}$ is an uniform elliptic operator  with variable coefficients and  $\dot W$ is a Gaussian noise general in time with  space covariance given by fractional, Riesz and Bessel kernel.
\end{abstract}

\begin{keywords}
Gaussian noisy environment, time fractional order
spde, Fox H-functions, mild solutions, uniform elliptic operator, chaos expansion, Riesz kernel,  Bessel kernel.
\end{keywords}

\section{Introduction}
In this article we prove the existence and uniqueness of the mild solution of  the equation

\begin{align}  \label{E:SPDE}
\begin{cases}
\displaystyle \left(\partial^\alpha  - \textit{B}\right) u(t,x)=u(t,x) \dot{W}(t,x),&\qquad t\in(0, T],\: x\in\RR^d, \\[0.5em]
\displaystyle \left.\frac{\partial^k}{\partial t^k} u(t,x)\right|_{t=0}=u_k(x), &\qquad 0\le k\le \Ceil{\alpha}-1, \:\: x\in\RR^d,
\end{cases}
\end{align}
with any fixed $T\in \RR^+$, $\alpha \in (1/2, 1) \cup (1, 2)$, where $\Ceil{\alpha}$ is the smallest integer not less than $\alpha$.
Here we assume 
\begin{itemize}
\item $u_0(x)$ is bounded continuously differentiable. Its first order derivative bounded and H\"{o}lder continuous. The  H\"{o}lder exponent $\gamma >\frac{2-\alpha}{\alpha}$ 
\item  $u_1(x)$ is bounded continuous function(locally h\"{o}lder continuous if $d>1$)
\end{itemize}
In this equation, $\dot{W}$ is a  zero mean   Gaussian noise with the following  covariance structure
\[
\EE(\dot{W}(t,x)\dot{W}(s,y))=\lambda(t-s)\Lambda(x-y),
\]
where  $\lambda(\cdot)$ is nonnegative definite and locally intergrable and $\Lambda(\cdot)$ is one of the following situations:
\begin{enumerate}[(i)]

\item Fractional kernel. $\displaystyle \Lambda(x):=\prod^{d}_{i=1}2H_i(2H_i-1)|x_i|^{2H_i-1},$ $x\in\RR^d$ and $1/2<H_i<1$.
\item Reisz kernel. $\Lambda(x):=C_{\alpha,d}|x|^{-\kappa}$, $x\in\RR^d$ and $0<\kappa<d$ and $C_{\alpha,d}=\Gamma(\frac\kappa2)2^{-\alpha}\pi^{-d/2}/\Gamma(\frac\alpha2).$
\item Bessel kernel. $\Lambda(x):=C_\alpha\int_0^\infty \omega^{-\frac{\kappa}{2}-1}e^{-\omega} e^{\frac{-|x|^2}{4\omega}}d\omega$, $x\in\RR^d$,  $0<\kappa<d$, and $C_\alpha=(4\pi)^{\alpha/2}\Gamma(\alpha/2)$.
\end{enumerate}

$$\textit{B}:=\sum^d_{i, j=1}a_{i, j}(x)\frac{\partial^2}{\partial x_i\partial x_j}+\sum^d_{j=1}b_j(x)\frac{\partial}{\partial x_j}+c(x)$$ is uniformly elliptic. Namely it satisfies the following conditions:
\begin{enumerate}[(i)]
\item $a_{ij}(x),  b_j(x)$ and $c(x)$ are bounded H\"older continuous functions on $\RR^d$
\item $\exists a_0>0,$ such that $\forall x, \xi \in \RR^d,$ 
$$\sum^d_{i, j=1}a_{i, j}(x)\xi_i\xi_j \geq a_0 |\xi|^2.$$
\end{enumerate}
The fractional derivative in time $\partial^\alpha  $ is understood in {\it Caputo} sense: 
\[
\partial^\alpha  f(t) :=
\begin{cases}
\displaystyle
 \frac{1}{\Gamma(m-\alpha)} \int_0^t\ud
\tau\: \frac{f^{(m)}(\tau)}{(t-\tau)^{\alpha+1-m}}&
\text{if $m-1<\alpha<m$\;,}\\[1em]
\displaystyle
\frac{\ud^m}{\ud t^m}f(t)& \text{if $\alpha=m$}\;.
\end{cases}
\]
Throughout this chapter, the initial conditions $u_k(x)$ are bounded continuous(H\"older continuous, if $d>1$) functions.
The study of the mild solution relies on the asymptote property of the Green's function $Z, Y$ of the following deterministic equation.

\begin{align} \label{E:PDE}
\begin{cases}
\displaystyle \left(\partial^\alpha  - {\it B} \right) u(t,x)= f(t,x),&\qquad t>0,\: x\in\RR^d, \\
\displaystyle \left.\frac{\partial^k}{\partial t^k} u(t,x)\right|_{t=0}=u_k(x), &\qquad 0\le k\le \Ceil{\alpha}-1, \:\: x\in\RR^d,
\end{cases}
\end{align}
In  \cite{HuHu15} we cover the case $\alpha\in (1/2, 1)$.
When $\alpha\in (1, 2)$, \cite{Pskhu09} showed that  when {\it B} is $\Delta$, Green's function $Y$ of \eqref{E:PDE} the following:
$$Y(t, x)=C_dt^{\frac\alpha2(2-d)}f_{\frac\alpha2}(|x|t^{-\frac\alpha2}; d-1, \frac\alpha2(2-d)),$$
where  
$$f_{\frac\alpha2}(z; \mu, \delta)=
\begin{cases}
\displaystyle  \frac{2}{\Gamma(\frac{\mu}{2})}\int_1^{\infty}\phi(-\frac\alpha2, \delta; -zt)(t^2-1)^{\frac{\mu}{2}-1}\ud t, &\qquad \mu>0, \\[0.5em]
\displaystyle \phi(-\frac\alpha2, \delta; -z), &\qquad \mu=0;
\end{cases}
$$ $C_d=2^{-n}\pi^{\frac{1-d}{2}}$ and  the wright's function
$$\phi(-\frac\alpha2, \delta; -z):=\sum_{n=0}^{\infty}\frac{z^n}{n!\Gamma(\delta-\frac\alpha2 n)}$$

The solution of \eqref{E:PDE} has the following form:
\begin{align}\label{E:Duhamel}
 u(t,x) = J_0(t,x) +
\int_0^t \ud s \int_{\RR^d} \ud y\: f(s,y) Y(t-s,x-y),
\end{align}
where and throughout the chapter, we denote
\begin{align}\label{E:J0}
J_0(t,x):=
\sum_{k=0}^{\Ceil{\alpha}-1}\int_{\RR^d} u_{ k}(y) Z_{k+1}(t,x-y) \ud y\,.
\end{align}
For case of $\alpha\in(1/2, 1)$, we use $Z$ in place of $Z_1$.
We have the following facts about $Z_1(t, x),\,  Z_2(t, x)$ and $Y(t, x)$.
$$Z_1(t, x)=D^{\alpha-1}Y(t, x); \qquad  Z_1(t, x)=\frac{\partial }{\partial t}Z_2(t, x)$$

As in \cite{HuHu15},  We first get the estimation of $Y$, then use Wiener chaos expansion to obtain relation between the parameter $\alpha, d, H_i$ and $\kappa$ such that the mild solution exist.

The rest of the article is organized as follows. Section 2  gives more details about the solution of \eqref{E:SPDE}, estimation of $Y$ for $\alpha\in(1/2, 1)$ and some preliminaries about Wiener spaces.  Section 3 gives the estimation of $Y$ for $\alpha\in(1, 2)$ and further estimations before proving the existence of the mild solution.
\\
{\bf Notation:} Throughout this chapter we denote $$p(t,x):=\exp\left\{-\sigma\left|\frac{x}{t^{\frac{\alpha}{2}}}\right|^{\frac{2}{2-\alpha}}\right\},$$ where
$\sigma>0$ is a generic positive constant whose values may vary at different occurrence, so is C.

\bigskip

\section{Preliminary}\label{Sec:Pre}
We consider a Gaussian noise $W$ on a complete probability space
$(\Omega,\mathcal{F},P)$ encoded by a
centered Gaussian family $\{W(\varphi) ; \, \varphi\in
L^2(\R_+\times \R^{d})\}$, whose covariance structure $\lambda(s-t)$
is given by
\begin{equation}\label{cov1}
\EE \left (  W(\varphi) \, W(\psi) \right)
= \int_{\R_{+}^{2}\times\R^{2d}}
\varphi(s,x)\psi(t,y)\lambda(s-t)\Lambda(x-y)\ud x\ud y\ud s\ud t,
\end{equation}
where $\lambda: \R \rightarrow \R_+$  is nonnegative definite and locally intergrable.
Throughout the chapter, we denote
\begin{align}\label{E:Ct}
 C_t := 2 \int_0^t \lambda(s)\ud s, \quad t>0.
\end{align}
$\Lambda: \R^d \rightarrow
\R_+$ is a fractional, Reisz or Bessel kernel.

\begin{definition}\label{D:Sol} Let $Z$ and $Y$ be the fundamental solutions defined by
\eqref{E:PDE} and \eqref{E:Duhamel}.
An adapted random field $ \{ u={u(t,x): \:t\geq 0, x\in \mathbb{R}^d} \} $ such that
$\EE \left[u^2(t,x)\right]<+\infty$ for all $(t,x)$
is a {\it mild solution} to \eqref{E:SPDE}, if for all $(t,x)\in\R_+\times \mathbb{R}^d$,
the process
\[
\left\{Y(t-s, x-y)u({s,y})1_{[0,t]}(s): \: s\ge0,\: y\in \mathbb{R}^d \right\}
\]
 is Skorodhod integrable (see \eqref{E:dual}), and
 $u$ satisfies
 \begin{equation}\label{E:mild}
u(t,x)=J_0(t,x)+\int_0^t\int_{\RR^d}Y(t-s,x-y)u(s,y)  W(\ud s,\ud y)
\end{equation}
 almost surely for all $(t,x)\in\RR_+\times\RR^d$,  where $J_0(t,x)$ is defined by
 \eqref{E:J0}.
\end{definition}

We use a similar chaos expansion to the one used in chapter 3. To prove the existence and uniqueness of the solution we show that for all $(t,x)$,
\begin{equation}\label{eq: L2 chaos}
\sum_{n=0}^{\infty}n!\|f_n(\cdot,\cdot,t,x)\|^2_{\mathcal{H}^{\otimes n}}< \infty\,.
\end{equation}

\section{Estimations of the Green's functions}
The fundamental solution of \eqref{E:PDE} is constructed by Levi's parametrix method.  We refer the reader to \cite{fried} for detail of this method.
In this section $x:=(x_1,x_2, \cdots, x_d) \in  \mathbb{R}^d,\xi, \eta$ are defined the same way;  $t\in(0, T]$.  We use $\gamma$ to denote the H\"older exponents with respect to spatial variables. We can assume they are equal. For $\alpha\in(1, 2)$, we assume 
$$\gamma>2-\frac 2\alpha.$$
For $\alpha\in(\frac12,1)$, Chapter 4 gives the estimations the $Z$ and $Y$.
For $\alpha\in(1, 2)$, we need some lemmas before we can estimate $Z_1,  Z_2$ and $Y$.

From 
\cite{koch} we have
\begin{align*}
Z_j(t, x- \xi)=&Z_j^0(t, x- \xi, \xi)+V_{Z_j}(t, x; \xi), \qquad j=1, 2.\\
Y(t, x- \xi)=&Y_0(t, x- \xi, \xi)+V_Y(t, x; \xi).
\end{align*}
We refer the reader to \cite{koch} for the definitions of $Z_k^0(t, x- \xi, \xi), Y_0(t, x- \xi, \xi)$ and $V_Y(t, x; \xi).$
Here we list their estimations which we use to get the estimations of $Z_k$ and  $Y$ in section 3.
These estimations are given in section 2.2 of \cite{koch} or Lemma 15 in \cite{Pskhu09}.

\begin{lemma}\label{est.z_0}
$$|Z^0_1(t, x- \xi, \eta)|\leq Ct^{-\frac{\alpha d}{2}}  \mu_d(t^{-\frac{\alpha }{2}}|x-\xi|)p(t, x-\xi),$$
$$|Z^0_2(t, x- \xi, \eta)|\leq Ct^{-\frac{\alpha d}{2}+1}  \mu_d(t^{-\frac{\alpha }{2}}|x-\xi|)p(t, x-\xi),$$ where
\begin{equation}
\mu_d(z):= \left\{
  \begin{array}{ll}
   $$ 1$$, & \hbox{ $d=1$;} \\
       $$1+|\log z|$$,  & \hbox{ $d=2$;} \\
     $$z^{2-d} $$,  & \hbox{ $d\geq 3$.}
  \end{array}
\right. \label{1e.Z_0-bound}
\end{equation}
\end{lemma}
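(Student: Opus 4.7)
The plan is to leverage the fact that $Z^0_j(t, x-\xi, \eta)$ is, by its very construction in Levi's parametrix method, the fundamental solution of the constant-coefficient fractional equation obtained by freezing the coefficients of $B$ at the point $\eta$. Under the self-similar substitution $z := t^{-\alpha/2}|x-\xi|$, the problem reduces to estimating a universal radial function built from the inverse Fourier transform of a Mittag-Leffler function $E_{\alpha,j}(-t^\alpha \zeta^\top A(\eta)\zeta)$, where $A(\eta) := (a_{ij}(\eta))$. Since $A(\eta)$ is uniformly elliptic, its eigenvalues are bounded above and below, so after an orthogonal change of variables and a rescaling absorbed into the generic constants $C$ and $\sigma$ in the definition of $p(t,x)$, it suffices to bound the representative case $A(\eta) = I$. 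That case collapses via the Pskhu representation recalled in the Introduction to Wright's function $\phi(-\alpha/2,\delta;-z)$ with $\delta$ depending on $\alpha$ and $d$.

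To establish the bound I would split into a small-$z$ and a large-$z$ regime. For large $z$, the classical saddle-point (Laplace) asymptotics for Wright's function give
$$\phi(-\alpha/2,\delta;-z) \sim C\, z^{(2\delta-2+\alpha)/(2-\alpha)} \exp\!\left(-\sigma z^{2/(2-\alpha)}\right),$$
which, combined with the scaling prefactor $t^{-\alpha d/2}$ (or $t^{-\alpha d/2+1}$ for $Z^0_2$), produces the exponential factor $p(t,x-\xi)$ and absorbs any polynomial correction. For small $z$, I would use the power series of $\phi$ and carry out the radial integral in the Pskhu formula; the leading singular behavior of the resulting profile is controlled by the Newtonian potential, yielding $z^{2-d}$ when $d\ge 3$, a logarithmic divergence $1+|\log z|$ when $d = 2$, and a bounded contribution when $d = 1$, which is exactly the function $\mu_d(z)$.

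The main obstacle will be handling the intermediate region $z\sim 1$ uniformly, and splicing the small-$z$ and large-$z$ expansions so that a single constant $C$ delivers the clean majorant $\mu_d(z)p(t,x-\xi)$ in every dimension. I would resolve this by a compactness/continuity argument: on any fixed annulus $\{r_1 \le z \le r_2\}\subset(0,\infty)$, both $|Z^0_j|$ (scaled appropriately) and the stated majorant are continuous and strictly positive, so their ratio is bounded and can be absorbed into $C$. A secondary technical point is the dependence on the frozen parameter $\eta$, but uniform ellipticity together with the H\"older continuity of the coefficients $a_{ij}$ makes all constants and exponents obtained in the analysis uniform in $\eta$. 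Finally, the extra factor of $t$ distinguishing the $Z^0_2$ estimate from the $Z^0_1$ estimate follows from the identity $Z_1 = \partial_t Z_2$ recorded after \eqref{E:J0}, so one bound implies the other up to a single power of $t$.
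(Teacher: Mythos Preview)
The paper does not prove this lemma. Immediately before stating it, the paper writes ``These estimations are given in section 2.2 of \cite{koch} or Lemma 15 in \cite{Pskhu09},'' and the lemma is then recorded without argument. So there is no ``paper's own proof'' to compare your proposal against; the result is simply imported from the literature.

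Your sketch is a reasonable outline of how those references actually obtain the bounds (self-similar reduction to a radial Wright-function profile, asymptotics for large argument, small-argument expansion giving the Newtonian-type singularity, uniformity in the frozen parameter $\eta$ via ellipticity). One small caveat: the relation $Z_1=\partial_t Z_2$ quoted after \eqref{E:J0} is stated for the full Green's functions, not for the parametrix pieces $Z^0_j$; even if the analogous relation holds at the level of $Z^0_j$, going from a pointwise bound on a derivative to a pointwise bound on the primitive requires an additional integration argument and control of the value at $t=0$. It is cleaner (and closer to what \cite{koch} and \cite{Pskhu09} do) to treat $Z^0_1$ and $Z^0_2$ in parallel by inserting the appropriate index $\delta$ in the Wright function $\phi(-\alpha/2,\delta;-z)$, from which the extra factor $t$ emerges directly from the scaling.
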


\begin{lemma}\label{1est.y_0}
$$|Y_0(t, x- \xi, \eta)|\leq Ct^{\alpha-\frac{\alpha d}{2}-1}  \mu_d(t^{-\frac{\alpha }{2}}|x-\xi|)p(t, x-\xi),$$ where
\begin{equation}
\mu_d(z):= \left\{
  \begin{array}{ll}
   $$ 1$$, & \hbox{ $d\leq3$;} \\
       $$1+|\log z|$$,  & \hbox{ $d=4$;} \\
     $$z^{4-d} $$,  & \hbox{ $d\geq 5$.}
  \end{array}
\right. \label{1e.Y_0-bound}
\end{equation}
\end{lemma}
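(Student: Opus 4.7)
The plan is to reduce the estimate for the variable-coefficient kernel $Y_0(t, x-\xi, \eta)$ to an asymptotic analysis of an explicit Wright-function expression, following the Levi parametrix construction referenced in the introduction. First I freeze the top-order coefficients at $\eta$: letting $A = A(\eta) = (a_{ij}(\eta))_{i,j}$, I perform the linear change of variable $y = A^{-1/2}(x-\xi)$, absorbing the lower-order perturbations into the parametrix remainder $V_Y$. What remains is the Green's function of $\partial^\alpha - \Delta$ (up to the Jacobian $1/\sqrt{\det A}$), for which the explicit formula from the introduction gives
$$Y_0(t, x-\xi, \eta) = \frac{C_d}{\sqrt{\det A}}\, t^{\frac{\alpha}{2}(2-d)}\, f_{\alpha/2}\bigl(|A^{-1/2}(x-\xi)|\, t^{-\alpha/2};\, d-1,\, \tfrac{\alpha}{2}(2-d)\bigr).$$
Uniform ellipticity makes $|A^{-1/2}(x-\xi)|$ comparable to $|x-\xi|$ uniformly in $\eta$, so the task reduces to bounding the single function $f_{\alpha/2}(z; d-1, \delta)$ at $z = t^{-\alpha/2}|x-\xi|$ with $\delta = \tfrac{\alpha}{2}(2-d)$.

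For the large-argument regime $z \to \infty$, I use the integral representation of $f_{\alpha/2}$ from the introduction together with the classical saddle-point asymptotics of the Wright function: there exist $c_1, c_2 > 0$ such that $|\phi(-\alpha/2, \delta; -w)| \lesssim w^{c_1} \exp(-c_2 w^{2/(2-\alpha)})$ as $w \to \infty$. Inserting this bound into $f_{\alpha/2}$ and evaluating the $s$-integral over $[1,\infty)$ by Laplace's method (the exponential factor is minimized at $s=1$) produces a bound of the form $z^{c_3} \exp(-c_4 z^{2/(2-\alpha)})$; after re-substituting $z = t^{-\alpha/2}|x-\xi|$, the polynomial prefactor is absorbed into $p(t, x-\xi)$ at the cost of slightly decreasing $\sigma$.

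For the near-origin regime (bounded $z$) I use the Taylor series $\phi(-\alpha/2, \delta; -w) = \sum_{n \geq 0} (-w)^n / (n!\,\Gamma(\delta - \tfrac{\alpha}{2} n))$ inside the integral representation. The $\mu_d$ dichotomy reflects which terms survive integration against the measure $(s^2-1)^{(d-3)/2}\, ds$: when $d \leq 3$, the resulting integrand is uniformly bounded at $z=0$, giving $\mu_d \equiv 1$; for $d \geq 5$, a rescaling $w = zs$ combined with the homogeneity of the measure extracts a factor of $z^{4-d}$, matching the singularity of the biharmonic Green's function; and $d = 4$ is the critical case where the integration produces a logarithm $1 + |\log z|$ through a delicate cancellation between the weight $(s^2-1)^{1/2}\, ds$ and the leading Wright coefficients.

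The hardest ingredient will be the sharp super-exponential large-argument asymptotic of $\phi(-\alpha/2, \delta; -w)$ with the exact exponent $2/(2-\alpha)$: for $\alpha \in (1,2)$ this requires a careful steepest-descent analysis of the Mellin--Barnes contour in the Fox H-function representation of $\phi$, keeping track of the relevant Stokes lines. A secondary subtlety is the critical dimension $d = 4$, where the logarithm emerges only after cancellation and the two regimes (bounded $z$ and large $z$) must be patched smoothly so as to produce a single uniform bound of the form stated in the lemma.
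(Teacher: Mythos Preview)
The paper does not prove this lemma at all: immediately before Lemma~\ref{est.z_0} the author states ``These estimations are given in section~2.2 of \cite{koch} or Lemma~15 in \cite{Pskhu09},'' and Lemmas~\ref{est.z_0}--\ref{1est.v_y} are all quoted from those references without argument. So there is no paper-side proof to compare against; your proposal is not a reconstruction of the paper's reasoning but rather an attempt to reprove a cited result from scratch.

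That said, your outline is essentially the route taken in the cited sources. Freezing the principal symbol at $\eta$, passing to the Laplacian via $A(\eta)^{-1/2}$, and then invoking the explicit Pskhu representation in terms of $f_{\alpha/2}$ is exactly how the parametrix $Y_0$ is built, and the large-$z$ asymptotics of the Wright function with exponent $2/(2-\alpha)$ is the standard mechanism producing $p(t,x-\xi)$. One point to watch: the Pskhu formula quoted in the introduction carries the prefactor $t^{\frac{\alpha}{2}(2-d)}$, whereas the bound in the lemma has $t^{\alpha-\frac{\alpha d}{2}-1}$, differing by $t^{-1}$; you should verify which object in \cite{Pskhu09} this formula actually represents (it is the fundamental solution subject to a Riemann--Liouville fractional integral relation with the Duhamel kernel $Y$, cf.\ the identity $Z_1=D^{\alpha-1}Y$ in the paper), so that the powers of $t$ line up before you proceed. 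The small-$z$ dichotomy you describe is correct in spirit, but the $z^{4-d}$ behaviour for $d\ge 5$ is more transparently obtained by expanding $\phi(-\alpha/2,\delta;-zt)$ in powers of $zt$ and integrating term by term: the first two Taylor coefficients vanish because $1/\Gamma$ vanishes at the relevant nonpositive integers, and the surviving leading term supplies exactly the $z^{4-d}$ singularity after the substitution $s\mapsto s/z$.
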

\
The following estimations of $V_{Z_1},V_{Z_2}$ and $V_Y$ are from Theorem 1 of \cite{koch}, where $\nu_1\in (0,1)$, such that $\gamma>\nu_1>2- \frac{2}{\alpha}$ and $ \nu_0=\nu_1-2+\frac{2}{\alpha}$.

\begin{lemma}\label{1est.v_z_1}
\begin{equation}
|V_{Z_1}(t, x; \xi)|\leq 
\begin{cases} Ct^{(\gamma-1)\frac\alpha2}  p(t, x-\xi)\,, & \qquad \ d=1\,;\\
Ct^{\nu_0 \alpha-1}|x-\xi|^{-d+\gamma-\nu_1+2-\nu_0}  p(t, x-\xi)\,,
 &\qquad  \ d\geq 2  \\
\end{cases}
\end{equation} 
\end{lemma}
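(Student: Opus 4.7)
The plan is to realize $V_{Z_1}$ as the solution of the Volterra integral equation produced by Levi's parametrix construction and then estimate that solution by iteration. First I would write
\[
V_{Z_1}(t, x; \xi) = \int_0^t \ud s \int_{\RR^d} \ud \eta\, Z_1^0(t-s, x-\eta; \eta)\, \Phi(s, \eta; \xi),
\]
where the density $\Phi$ solves the Volterra equation
\[
\Phi(t,x;\xi) = K(t,x;\xi) + \int_0^t \ud s \int_{\RR^d} \ud \eta\, K(t-s, x-\eta; \eta)\, \Phi(s, \eta; \xi),
\]
with the parametrix error $K(t,x;\xi) = (B_x - B_\xi) Z_1^0(t,x;\xi)$ where $B_\xi$ is the operator $B$ with coefficients frozen at $\xi$.

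Next I would bound $K$. Using H\"older continuity of the coefficients of $B$ with exponent $\gamma$ together with estimates for the second spatial derivatives of $Z_1^0$ (which cost an extra $t^{-\alpha}$ compared with Lemma \ref{est.z_0}), one obtains a raw estimate of the form
\[
|K(t,x;\xi)| \leq C\, |x-\xi|^\gamma\, t^{-\alpha-\alpha d/2}\, p(t,x-\xi)
\]
up to $\mu_d$-type logarithmic or polynomial corrections in low dimensions. To make this singularity integrable in time and to land on the exponents in the statement, I would introduce $\nu_1 \in (0,1)$ with $\gamma>\nu_1 > 2 - 2/\alpha$ and $\nu_0 = \nu_1 - 2 + 2/\alpha > 0$, and use the elementary inequality $|z|^{\gamma-\nu_1} p(t,z) \leq C t^{(\gamma-\nu_1)\alpha/2} p(t,z)$ (with a slightly smaller $\sigma$ in $p$) to trade part of the spatial decay $|x-\xi|^{\gamma-\nu_1}$ for the time gain $t^{\nu_0 \alpha-1}$, while keeping the residual spatial factor.

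Once $K$ is controlled, I would run the Picard iteration: the iterates $K^{*k}$ are estimated by successive time--space convolutions, and Beta-function identities in time together with convolution estimates on $p$ in space give uniform-in-$k$ constants, so that the series $\Phi = \sum_{k\geq 1} K^{*k}$ converges to a function obeying a bound of the same type as $K$. Convolving with $Z_1^0$ and integrating then produces $V_{Z_1}$. In $d=1$ the profile $\mu_1 \equiv 1$ is non-singular, so the spatial integral is carried out freely and only the purely temporal factor $t^{(\gamma-1)\alpha/2}\, p(t,x-\xi)$ survives; for $d\geq 2$ a spatial singularity remains, yielding the factor $|x-\xi|^{-d+\gamma-\nu_1+2-\nu_0}$.

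The main obstacle will be the book-keeping of the convolution estimates: one must control the joint singular behaviour of $|x-\eta|^{\cdot}\, p(t-s, x-\eta)$ convolved with $\mu_d$-type factors coming from $Z_1^0$, verify that the constants produced at each Picard iterate do not accumulate, and track precisely how the parameters $\nu_0,\nu_1$ combine into the final spatial exponent. The dimension-dependent presence or absence of the $\mu_d$-logarithm, and the resulting asymmetric splitting between $d=1$ and $d\geq 2$, are the most delicate points and explain the two-line form of the conclusion.
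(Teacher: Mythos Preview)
The paper does not prove this lemma at all: immediately before stating it, the paper writes ``The following estimations of $V_{Z_1}, V_{Z_2}$ and $V_Y$ are from Theorem 1 of \cite{koch}'' and simply records the bound as a citation from Kochubei's 2014 paper. So there is no proof in the paper to compare against; the lemma is imported wholesale.

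Your proposal is not wrong as a sketch of what happens \emph{inside} Kochubei's argument --- Levi's parametrix, the Volterra equation for the correction term, H\"older continuity of the coefficients to bound the error kernel $K$, trading spatial for temporal decay via the auxiliary exponents $\nu_0,\nu_1$, and summing the Picard iterates with Beta-function control --- and in that sense it is a plausible outline of how one would reproduce \cite{koch}, Theorem~1. But relative to the present paper your plan is a genuine over-proof: you are proposing to redo a cited result from scratch, whereas the paper's ``proof'' is a one-line reference. If the goal is to match the paper, the correct answer is simply to invoke \cite{koch}; if the goal is to make the paper self-contained, be aware that the convolution bookkeeping you flag (especially the interaction of the $\mu_d$ factor from $Z_1^0$ with the $|x-\eta|^{\kappa}$ singularities in the iterates for $d\ge 2$) is exactly where the work lies and would need several pages, not a paragraph.
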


\begin{lemma}\label{1est.v_z_2}
\begin{equation}
|V_{Z_2}(t, x; \xi)|\leq 
\begin{cases} Ct^{(\gamma-1)\frac\alpha2+1}  p(t, x-\xi)\,, & \qquad \ d=1\,;\\
Ct^{\frac{\nu_0 \alpha}{2}+1-\alpha}|x-\xi|^{-d+\gamma-\nu_1+2-\nu_0}  p(t, x-\xi)\,,
 &\qquad  \ d\geq 2  \\
\end{cases}
\end{equation} 
\end{lemma}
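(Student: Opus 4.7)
The plan rests on the Levi parametrix representation of $V_{Z_2}$ cited from \cite{koch}: $V_{Z_2}$ admits the space--time convolution form
$$V_{Z_2}(t,x;\xi) = \int_0^t\!\!\int_{\RR^d} Z_2^0(t-\tau, x-\eta, \eta)\, \Phi(\tau,\eta;\xi)\, d\eta\, d\tau,$$
where the density $\Phi$ is defined by the Volterra integral equation generated by freezing the coefficients of $B$, and is (up to the $Z_1^0$ versus $Z_2^0$ kernel swap) the same density used in constructing $V_{Z_1}$. Equivalently, the identity $Z_1 = \partial_t Z_2$ descends to the correction terms: since each $V_{Z_j}$ has vanishing initial data by the Levi construction, one has $V_{Z_2}(t,x;\xi) = \int_0^t V_{Z_1}(s,x;\xi)\, ds$.

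For $d=1$, I would apply this last identity together with Lemma \ref{1est.v_z_1}. The exponential factor $p(s,x-\xi)$ is monotone increasing in $s$ (since $s^{-\alpha/(2-\alpha)}$ is decreasing in $s$), so $p(s,x-\xi) \le p(t,x-\xi)$ on $0 \le s \le t$. Pulling $p(t, x-\xi)$ out and integrating $s^{(\gamma-1)\alpha/2}$ from $0$ to $t$ produces the exponent $(\gamma-1)\alpha/2 + 1$ and yields the claimed bound.

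For $d \ge 2$, a direct time integration of Lemma \ref{1est.v_z_1} yields the exponent $\nu_0 \alpha$ rather than the asserted $\nu_0 \alpha/2 + 1 - \alpha$, so one must go back to the convolution representation. I would bound $Z_2^0$ by Lemma \ref{est.z_0}, carry out the Volterra iteration to bound $\Phi$ by a kernel of the form $C\tau^{\nu_0 \alpha/2 - \alpha} |\eta-\xi|^{-d+\gamma-\nu_1+2-\nu_0}$ (where the $\alpha/2$ in place of $\alpha$ reflects the additional factor $t^{1}$ sitting in the Lemma \ref{est.z_0} bound for $Z_2^0$ compared to $Z_1^0$), and then use a Chapman--Kolmogorov type estimate $\int_{\RR^d} p(t-\tau, x-\eta)\, p(\tau, \eta-\xi)\, d\eta \le C\, p(t,x-\xi)$ together with a Beta-function time integral to produce the exponent $\nu_0 \alpha/2 + 1 - \alpha$ and recover the stated spatial singularity $|x-\xi|^{-d+\gamma-\nu_1+2-\nu_0}$.

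The hard part is propagating the precise exponents through the Volterra iteration for $\Phi$, since one must show that the $\alpha/2$ scaling improvement holds at every iterate rather than degrading to $\alpha$. This requires the hypothesis $\gamma > 2 - 2/\alpha$ (equivalently $\nu_0 > 0$) both to make $\tau^{\nu_0 \alpha/2 - 1}$ locally integrable in $\tau$ and to make $|\eta-\xi|^{-d+\gamma-\nu_1+2-\nu_0}$ locally integrable against a Gaussian weight near $\eta = \xi$. Once these integrability conditions are verified the Volterra iteration converges and the estimate drops out, as in Theorem 1 of \cite{koch}.
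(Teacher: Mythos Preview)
The paper does not prove this lemma; as the sentence immediately preceding Lemmas~\ref{1est.v_z_1}--\ref{1est.v_y} states, these three estimates are quoted directly from Theorem~1 of \cite{koch}. So there is no in-paper argument to compare against---your proposal is an attempt to reconstruct Kochubei's proof.

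Your $d=1$ argument via the identity $V_{Z_2}(t,x;\xi)=\int_0^t V_{Z_1}(s,x;\xi)\,ds$ together with the monotonicity $p(s,x-\xi)\le p(t,x-\xi)$ is clean and correct. However, your concern in the $d\ge 2$ case is misplaced. Direct time integration of the Lemma~\ref{1est.v_z_1} bound gives $Ct^{\nu_0\alpha}|x-\xi|^{-d+\gamma-\nu_1+2-\nu_0}p(t,x-\xi)$, and since $\alpha\in(1,2)$ and $\nu_0>0$ one has
\[
\nu_0\alpha \;-\; \Bigl(\tfrac{\nu_0\alpha}{2}+1-\alpha\Bigr) \;=\; \tfrac{\nu_0\alpha}{2}+\alpha-1 \;>\; 0,
\]
so that $t^{\nu_0\alpha}\le T^{\,\nu_0\alpha/2+\alpha-1}\,t^{\,\nu_0\alpha/2+1-\alpha}$ on $(0,T]$. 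In other words the exponent you obtain by direct integration is \emph{larger} than the stated one, hence the bound is \emph{stronger} on a bounded time interval and already implies the lemma; there is no need to redo the parametrix iteration. (Note the stated exponent $\nu_0\alpha/2+1-\alpha$ is typically negative while yours is positive.) The Volterra-iteration route you sketch is closer to how \cite{koch} derives the estimate from scratch, but once Lemma~\ref{1est.v_z_1} is in hand the one-line time integration suffices for both cases.
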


\begin{lemma}\label{1est.v_y}
\begin{equation}
|V_Y(t, x; \xi)|\leq 
\begin{cases} Ct^{\alpha -1+(\gamma-1)\frac\alpha2}  p(t, x-\xi)\,, & \qquad \ d=1\,;\\
Ct^{\nu_0 \alpha-1}|x-\xi|^{-d+\gamma-\nu_1+2-\nu_0}  p(t, x-\xi)\,,
 &\qquad  \ d\geq 2  \\
\end{cases}
\end{equation} 
\end{lemma}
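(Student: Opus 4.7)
The estimate on $V_Y$ follows from the Levi parametrix construction that underlies the decomposition $Y(t,x-\xi) = Y_0(t,x-\xi,\xi) + V_Y(t,x;\xi)$. Substituting this ansatz into $(\partial^\alpha - B) Y = 0$ yields a Volterra-type integral equation
\[
V_Y(t,x;\xi) = \int_0^t \int_{\RR^d} Y_0(t-s, x-y, y)\, \Phi(s,y;\xi)\, \ud y\, \ud s,
\]
where $\Phi = K + K * \Phi$ with kernel $K(t,x;\xi) := (B_x - B_\xi) Y_0(t,x-\xi,\xi)$, the action of the difference between the full operator and the $\xi$-frozen operator on $Y_0$. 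The plan is to iterate to $\Phi = \sum_{k\geq 1} K^{*k}$, bound each iterate, sum, and finally convolve with $Y_0$.

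The first step is to estimate $K$ itself. Writing $B_x - B_\xi$ as a sum with coefficients $a_{ij}(x)-a_{ij}(\xi)$, $b_j(x)-b_j(\xi)$, $c(x)-c(\xi)$, the H\"older hypothesis delivers a factor $|x-\xi|^\gamma$, while differentiating $Y_0$ twice in space (using Lemma \ref{1est.y_0} and its derivative versions) contributes an extra $t^{-\alpha}$. Trading a portion $|x-\xi|^{\nu_1}$ of the H\"older factor for a temporal improvement via the Gaussian bound $|y|^\beta p(t,y) \leq C t^{\alpha\beta/2} p(t,y)$, and setting the auxiliary parameter $\nu_1 \in (2-2/\alpha,\gamma)$ and $\nu_0 = \nu_1 - 2 + 2/\alpha > 0$, one reaches
\[
|K(t,x;\xi)| \leq C\, t^{-1+\nu_0\alpha/2}\, |x-\xi|^{-d+\gamma-\nu_1+2-\nu_0}\, p(t,x-\xi)
\]
for $d \geq 2$, with the spatial factor replaced by $1$ when $d=1$.

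Next I would iterate. Using the Beta identity $\int_0^t s^{a-1}(t-s)^{b-1}\ud s = B(a,b)\,t^{a+b-1}$ in time, together with the standard convolution bound for $|\cdot|^{-a} p(t,\cdot) * |\cdot|^{-b} p(s,\cdot)$ in space, one shows inductively that $|K^{*k}|$ preserves the same spatial profile and carries a time factor of order $C^k t^{-1+k\nu_0\alpha/2}/\Gamma(k\nu_0\alpha/2)$. This yields absolute convergence of $\Phi = \sum_{k\geq 1} K^{*k}$ on bounded time intervals. Finally, convolving this bound for $\Phi$ against $Y_0$ via Lemma \ref{1est.y_0} and performing one more Beta-type integration in $t$, the two cases $\mu_d \equiv 1$ (low dimension) and $\mu_d$ singular at zero (higher dimension) yield respectively the purely temporal estimate $t^{\alpha-1+(\gamma-1)\alpha/2}$ for $d=1$ and the mixed estimate $t^{\nu_0\alpha - 1}|x-\xi|^{-d+\gamma-\nu_1+2-\nu_0}$ for $d \geq 2$.

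The main obstacle is the sharp bookkeeping of exponents through the Volterra iteration: one must verify that the spatial singularity of order $-d+\gamma-\nu_1+2-\nu_0$ is reproduced, and not worsened, by successive space-time convolutions, and that the resulting Gamma ratios in $k$ are summable. This is precisely where the standing hypothesis $\gamma > 2-2/\alpha$ enters, since it provides the slack needed to pick $\nu_1 < \gamma$ with $\nu_0 > 0$, keeping the temporal exponent integrable at $s=0$ and the spatial exponent strictly above the non-integrable threshold $-d$.
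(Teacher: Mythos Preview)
The paper does not prove this lemma: it states immediately before Lemmas~\ref{1est.v_z_1}--\ref{1est.v_y} that ``the following estimations of $V_{Z_1}, V_{Z_2}$ and $V_Y$ are from Theorem~1 of \cite{koch}.'' Your sketch is therefore not competing with any argument in the paper itself, but rather reconstructing the proof of the cited result. The outline you give---writing $V_Y$ as a space--time convolution of $Y_0$ against the resolvent $\Phi=\sum_{k\ge 1}K^{*k}$ of the frozen-coefficient defect $K=(B_x-B_\xi)Y_0$, bounding $K$ via the H\"older hypothesis on the coefficients together with derivative estimates for $Y_0$, and iterating with Beta integrals in time---is precisely the Levi parametrix scheme that Kochubei carries out in \cite{koch} (and that the paper alludes to by citing \cite{fried} for the classical version). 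Your identification of the role of the standing assumption $\gamma>2-2/\alpha$ in securing $\nu_0>0$, hence an integrable time exponent and a summable iteration, is also on target. The sharp exponent bookkeeping you flag as the main obstacle is real and is exactly where the cited reference does the work; your plan is sound, but it is a plan rather than a substitute for that computation.
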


Based on the above three lemmas we have

\begin{lemma}\label{1est.y} 
Let $x \in  \mathbb{R}^d, t\in(0, T]$.   Then
\begin{equation}
|Y(t, x- \xi)|\leq \left\{
  \begin{array}{ll}
   $$ Ct^{-1+\frac{\alpha}{2}}   p(t, x-\xi)$$,  & \hbox{$d=1$;} \\
     $$Ct^{\alpha -\frac\alpha2\gamma+\nu_0\alpha-2} |x-\xi|^{-d+\gamma-2\nu_0+\frac{2}{\alpha}}  p(t, x-\xi)$$,  & \hbox{$d\geq 2$.}
  \end{array}
\right. \label{1e.Y-bound}
\end{equation}
\end{lemma}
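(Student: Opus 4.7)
The approach is to use the decomposition $Y(t,x-\xi) = Y_0(t,x-\xi,\xi) + V_Y(t,x;\xi)$ recalled before Lemma \ref{est.z_0} and estimate each piece separately by the right-hand side of \eqref{1e.Y-bound}, using Lemmas \ref{1est.y_0} and \ref{1est.v_y}. The workhorse tool throughout is the standard scaling inequality for the weight $p$: since $\zeta \mapsto \zeta^a e^{-\sigma\zeta^{2/(2-\alpha)}}$ is bounded on $[0,\infty)$ for any $a \geq 0$, one has
\[
|x-\xi|^a\,t^b\,p(t,x-\xi) \leq C\, t^{b+a\alpha/2}\, p(t,x-\xi) \qquad \text{for every } a\geq 0,\ b\in\R,
\]
up to replacing the constant $\sigma$ inside $p$ by a smaller one (which is allowed by our convention that $\sigma$ is generic).

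The case $d=1$ is immediate: Lemma \ref{1est.y_0} gives $|Y_0|\leq C\,t^{\alpha/2-1} p$ since $\mu_1\equiv 1$, while Lemma \ref{1est.v_y} gives $|V_Y|\leq C\,t^{\alpha/2-1+\alpha\gamma/2} p \leq C\,t^{\alpha/2-1} p$ because $t\leq T$ and $\gamma>0$. For $d\geq 2$ the bound on $V_Y$ reduces, via the identity $\nu_0 = \nu_1 - 2 + 2/\alpha$, to the observation that the spatial exponent $-d+\gamma-\nu_1+2-\nu_0$ of Lemma \ref{1est.v_y} equals $-d+\gamma-2\nu_0+2/\alpha$, matching the target. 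The difference between the two time exponents is $1 - \alpha(2-\gamma)/2$, which is nonnegative exactly under the hypothesis $\gamma > 2 - 2/\alpha$ and is therefore absorbed into $C$ by $t\leq T$.

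For $|Y_0|$ in dimensions $d\geq 2$ the idea is to rewrite the target bound in the scaling variable $\rho := |x-\xi|t^{-\alpha/2}$. Substituting $|x-\xi| = \rho\, t^{\alpha/2}$ into $|x-\xi|^{-d+\gamma-2\nu_0+2/\alpha}$ produces an additional factor $t^{(\alpha/2)(-d+\gamma-2\nu_0+2/\alpha)}$ which, combined with the target's $t^{\alpha-\alpha\gamma/2+\nu_0\alpha-2}$, collapses after cancellation to the single prefactor $t^{\alpha-\alpha d/2-1}$ appearing in Lemma \ref{1est.y_0}. The estimate therefore reduces to the pointwise inequality $\mu_d(\rho) \leq C\,\rho^{-d+\gamma-2\nu_0+2/\alpha}$ (understood modulo shrinking $\sigma$ in $p$). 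This is checked in the three regimes of Lemma \ref{1est.y_0}: for $d\geq 5$, where $\mu_d(\rho) = \rho^{4-d}$, the exponent difference $4-\gamma+2\nu_0-2/\alpha$ is strictly positive (using $\gamma<1$, $\nu_0>0$, $2/\alpha<2$), so the extra positive power of $\rho$ is absorbed by $p$; for $d=4$ the logarithmic factor $1+|\log\rho|$ is dominated by $\rho^{-\epsilon}+\rho^{\epsilon}$ for arbitrary $\epsilon>0$ and is handled by the (strictly negative) target exponent; for $d=2,3$, where $\mu_d\equiv 1$, the auxiliary parameter $\nu_1$ is chosen close enough to $\gamma$ (still within $(2-2/\alpha,\gamma)$, which is possible by the hypothesis $\gamma > 2-2/\alpha$) so that the target exponent becomes negative, and then $1 \leq C\rho^{-d+\gamma-2\nu_0+2/\alpha}$ follows from the exponential decay of $p$ at infinity.

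The main obstacle is the algebraic identity that collapses the target's combined $t$-exponent to exactly $\alpha-\alpha d/2-1$; this is what makes the comparison between $Y_0$ and the target essentially pointwise in $\rho$ and reduces everything to a dimension-by-dimension check on $\mu_d$. The only other delicate step is the choice of $\nu_1$ in low dimensions and the absorption of the logarithmic singularity in $d=4$, both of which are routine once the reduction is in place.
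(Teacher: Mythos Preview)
Your proposal is correct and follows essentially the same route as the paper: decompose $Y=Y_0+V_Y$, use the scaling inequality $\zeta^a e^{-\sigma\zeta^{2/(2-\alpha)}}\le C$ to absorb nonnegative powers of $\rho=|x-\xi|t^{-\alpha/2}$ into $p$, and then treat the dimension cases according to the form of $\mu_d$. Your write-up is in fact a bit more explicit than the paper's in the low-dimensional case---you note that for $d=2,3$ one needs $\nu_1$ close enough to $\gamma$ so that the target spatial exponent $-d+\gamma-2\nu_0+2/\alpha$ is negative, a point the paper glosses over (and which is only genuinely needed when $d=2$, since for $d\ge 3$ the exponent is automatically negative).
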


\begin{proof} 

We "add" together the estimation of $Y_0$ in Lemma \ref{1est.y_0} and $V_y$ in Lemma \ref{1est.v_y} to get the estimation of $Y$.  We use the following inequality throughout the proof.
$$a, b,  \sigma>0, \quad \text{then} \quad \exists \sigma, C>0, \quad  s.t. \quad x^{a}e^{-\sigma x^b}<Ce^{-\sigma'x^b},  $$

First when $d=1$,

\begin{eqnarray*}
|Y(t, x-\xi)|
&\leq& |Y_0(t, x-\xi, \xi)|+|V_Y(t, x,\xi)|\\
&\leq& Ct^{\alpha -1+(\gamma-1)\frac\alpha2}  p(t, x-\xi)+Ct^{-1+\frac{\alpha}{2}}  p(t, x-\xi)\\
&\leq& Ct^{-1+\frac{\alpha}{2}}   p(t, x-\xi)\,.
\end{eqnarray*}

When $d\geq 5$,  
by the fact $$\nu_0=\nu_1-2+2/\alpha \qquad\text{and}\qquad \gamma>\nu_1>2-\frac2\alpha\,,$$ we have 
$$4-\gamma+2\nu_0-\frac2\alpha=-\gamma+2\nu_1+\frac{2}{\alpha}\geq0\,.$$
Therefore
\begin{eqnarray*} 
|Y_0(t, x-\xi, \xi)|
&\leq&Ct^{\alpha-\frac{\alpha d}{2}-1} \bigg|\frac{x-\xi}{t^\frac\alpha2}\bigg|^{4-d}  p(t, x-\xi)\\
&=& C t^{\alpha-\frac{\alpha d}{2}-1}\bigg|\frac{x-\xi}{t^\frac\alpha2}\bigg|^{-d+\gamma-2\nu_0+\frac{2}{\alpha} }\bigg|\frac{x-\xi}{t^\frac\alpha2}\bigg|^{4-\gamma+2\nu_0-\frac2\alpha}p(t, x-\xi)\\
&\leq& C t^{\alpha -\frac\alpha2\gamma+\nu_0\alpha-2}|x-\xi|^{-d+\gamma-2\nu_0+\frac{2}{\alpha} }p(t, x-\xi)\,.
\end{eqnarray*}

Furthermore because of the assumption $$ \gamma>2-\frac 2\alpha,$$ we have 
$$\alpha -\frac\alpha2\gamma+\nu_0\alpha-2<\nu_0\alpha-1.$$
Therefore
\begin{eqnarray*}
|Y(t, x-\xi)|
&\leq&|Y_0(t, x-\xi, \xi)|+|V_y(t, x,\xi)|\\
&\leq&  C t^{\alpha -\frac\alpha2\gamma+\nu_0\alpha-2}|x-\xi|^{-d+\gamma-2\nu_0+\frac{2}{\alpha} }p(t, x-\xi)\\
&+&Ct^{v_0\alpha-1} |x-\xi|^{-d+\gamma-2\nu_0+\frac{2}{\alpha}}  p(t, x-\xi)\\
&\leq& Ct^{\alpha -\frac\alpha2\gamma+\nu_0\alpha-2} |x-\xi|^{-d+\gamma-2\nu_0+\frac{2}{\alpha}} p(t, x-\xi) \,.
\end{eqnarray*}

When $d=2$ and $d=3$,  as in the previous cases, we first have 

\begin{eqnarray*} 
|Y_0(t, x-\xi, \xi)|
&\leq& Ct^{\alpha-\frac{\alpha n}{2}-1}  p(t, x-\xi)\\
&\leq& C t^{\alpha -\frac\alpha2\gamma+\nu_0\alpha-2}|x-\xi|^{-d+\gamma-2\nu_0+\frac{2}{\alpha}} p(t, x-\xi).
\end{eqnarray*}
Then as the last step in the case of  $d\geq 5$,  we have
\begin{eqnarray*}
|Y(t, x-\xi)|
&\leq& Ct^{\alpha -\frac\alpha2\gamma+\nu_0\alpha-2} |x-\xi|^{-d+\gamma-2\nu_0+\frac{2}{\alpha}} p(t, x-\xi) \,.
\end{eqnarray*}

When $d=4$,  let's first transform the estimation of $Y_0$ into the following form:
$$t^{\zeta_d} |x-\xi|^{\kappa_d}   p (t, x-\xi).$$
We have
\begin{eqnarray*} 
|Y_0(t, x-\xi, \xi)|
&\leq&Ct^{\alpha-\frac{\alpha d}{2}-1}  \left\{\bigg|\frac{x-\xi}{t^\frac\alpha2}\bigg|^{\theta}+\bigg|\frac{t^\frac\alpha2}{x-\xi}\bigg|^{\theta}\right\}  p(t, x-\xi)\\
&\leq&Ct^{\alpha-\frac{\alpha d}{2}-1}  \bigg|\frac{t^\frac\alpha2}{x-\xi}\bigg|^{\theta}\left\{\bigg|\frac{x-\xi}{t^\frac\alpha2}\bigg|^{2\theta}+1\right\}  p(t, x-\xi),\\
\end{eqnarray*}
for $\forall \, \theta>0$.

If $|\frac{x-\xi}{t^\frac\alpha2}|\leq1$, then
\begin{eqnarray*} 
\left\{\bigg|\frac{x-\xi}{t^\frac\alpha2}\bigg|^{2\theta}+1\right\}  p(t, x-\xi)
&\leq&2 p(t, x-\xi);
\end{eqnarray*}
if $|\frac{x-\xi}{t^\frac\alpha2}|>1$, then
\begin{eqnarray*} 
\left\{\bigg|\frac{x-\xi}{t^\frac\alpha2}\bigg|^{2\theta}+1\right\}  p(t, x-\xi)
&\leq&2\bigg|\frac{x-\xi}{t^\frac\alpha2}\bigg|^{2\theta}  p(t, x-\xi)\\
&\leq&Cp(t, x-\xi).
\end{eqnarray*}

Therefore if we choose $\theta>0$ such that  $$-\theta>-d+\gamma-2\nu_0+\frac{2}{\alpha},$$ we have
\begin{eqnarray*} 
|Y_0(t, x-\xi, \xi)|
&\leq&Ct^{\alpha-\frac{\alpha d}{2}-1}  \bigg|\frac{x-\xi}{t^\frac\alpha2}\bigg|^{-\theta}p(t, x-\xi)\\
&\leq& C t^{\alpha-\frac{\alpha d}{2}-1}\bigg|\frac{x-\xi}{t^\frac\alpha2}\bigg|^{-d+\gamma-2\nu_0+\frac{2}{\alpha} }p(t, x-\xi)\\
&\leq& C t^{\alpha -\frac\alpha2\gamma+\nu_0\alpha-2}|x-\xi|^{-d+\gamma-2\nu_0+\frac{2}{\alpha} }p(t, x-\xi)\,.
\end{eqnarray*}
As in previous two cases, we end up with
\begin{eqnarray*}
|Y(t, x- \xi)|
&\leq& Ct^{\alpha -\frac\alpha2\gamma+\nu_0\alpha-2} |x-\xi|^{-d+\gamma-2\nu_0+\frac{2}{\alpha}} p(t, x-\xi) \,.
\end{eqnarray*}

\end{proof}
Let's denote the the estimation function of $Y$ by $t^{\zeta_d} |x-\xi|^{\kappa_d}   p (t, x-\xi).$ For the estimation of integral  \eqref{basin} involving $Y$ and fractional kernel   it more convenient  to represent the estimation of $Y$ as the 
the product of one dimensional functions. To this purpose, 
as in the case of $0<\alpha<1$, the estimation of $Y$ is represented as the product of one dimensional functions, which is shown in the following lemma.
\begin{lemma}
Let $x_i, \xi_i \in \RR, t\in(0, T]$
\begin{equation}\label{e.Y-product-b}
|Y(t, x- \xi)|\leq C\prod_{i=1}^{d} t^{\zeta_d/d} |x_i-\xi_i|^{\kappa_d/d}   p (t, x_i-\xi_i)\,,
\end{equation}
where  $\zeta_{d}$ and $\kappa_{d}$ are the powers of $t$ and $x-\xi$ in the estimation of $Y$, i.e.,
\begin{equation}\label{zeta2}
 \zeta_{d} =
\begin{cases} -1+\frac{\alpha}{2}  ,  &  d=1 ;  \\
         \alpha -\frac\alpha2\gamma+\nu_0\alpha-2  ,  &  d\geq 2\,. 
  \end{cases}
  \end{equation}
  and
\begin{equation}
 \kappa_{d} =
\begin{cases}   0 ,  &  d=1  ;  \\
     -d+\gamma-2\nu_0+\frac{2}{\alpha}  ,  &    d\geq 2\,. 
  \end{cases}\label{kappa2}
\end{equation}
\end{lemma}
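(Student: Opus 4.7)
The plan is to tensorize the bound from the previous lemma. For $d=1$ the estimate is already a single one-dimensional factor, so the only real work is for $d\geq 2$. I would factor the three ingredients separately: the temporal power $t^{\zeta_d}$, the spatial power $|x-\xi|^{\kappa_d}$, and the Gaussian-type decay $p(t,x-\xi)$. The temporal factor is immediate since $t^{\zeta_d}=\prod_{i=1}^d t^{\zeta_d/d}$.

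For the spatial polynomial, the standard move is AM--GM applied to the squared coordinates:
$$|x-\xi|^2=\sum_{i=1}^d(x_i-\xi_i)^2\geq d\prod_{i=1}^d|x_i-\xi_i|^{2/d},$$
so $|x-\xi|\geq\sqrt{d}\prod_i|x_i-\xi_i|^{1/d}$. Provided $\kappa_d\leq 0$ (which, for $d\geq 3$, follows directly from $\kappa_d\leq -d+\gamma+2/\alpha<3-d$, and is the role of the standing assumption $\gamma>2-2/\alpha$ in the borderline small-$d$ cases), raising to the $\kappa_d$-th power flips the inequality and yields $|x-\xi|^{\kappa_d}\leq d^{\kappa_d/2}\prod_i|x_i-\xi_i|^{\kappa_d/d}$. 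Any stray positive polynomial power that might appear in marginal cases is absorbed into the exponential via the standard bound $y^a e^{-\sigma y^b}\leq Ce^{-\sigma'y^b}$ already exploited in the proof of the previous lemma.

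For the exponential factor, set $\beta:=1/(2-\alpha)$; since $\alpha\in(1,2)$ one has $\beta>1$. A short scaling argument (normalize $\sum a_i=1$, then $a_i^{\beta}\leq a_i$ on $[0,1]$ and sum) establishes $(\sum_i a_i)^{\beta}\geq\sum_i a_i^{\beta}$ for nonnegative $a_i$. Applied with $a_i=(x_i-\xi_i)^2$, this delivers $|x-\xi|^{2\beta}\geq\sum_i|x_i-\xi_i|^{2\beta}$, and hence
$$p(t,x-\xi)=\exp\{-\sigma|x-\xi|^{2\beta}/t^{\alpha\beta}\}\leq\prod_{i=1}^d p(t,x_i-\xi_i).$$

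Multiplying the three factored bounds recovers the claimed product representation, with the constant $d^{\kappa_d/2}$ absorbed into $C$. The principal obstacle is controlling the sign of $\kappa_d$: this is the only place where the standing hypothesis $\gamma>2-2/\alpha$ (together with $d\geq 2$) is genuinely used; everything else is elementary algebra with AM--GM and the subadditivity inequality for $s\mapsto s^{\beta}$.
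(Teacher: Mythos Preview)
The paper does not supply a proof of this lemma; it is stated immediately after Lemma~\ref{1est.y} with only the remark ``As in the case of $0<\alpha<1$, the estimation of $Y$ is represented as the product of one dimensional functions,'' implicitly pointing to the analogous computation in \cite{HuHu15}. Your tensorization via AM--GM for the spatial power and superadditivity $(\sum a_i)^\beta\ge\sum a_i^\beta$ (valid since $\beta=1/(2-\alpha)>1$ here) for the exponential is exactly the natural argument, and it goes through cleanly whenever $\kappa_d\le 0$.

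One point deserves tightening. Your claim that ``the standing assumption $\gamma>2-2/\alpha$'' by itself forces $\kappa_2\le 0$ is not quite right. For $d=2$ one computes
\[
\kappa_2=-2+\gamma-2\nu_0+\tfrac{2}{\alpha},
\]
so $\kappa_2\le 0$ is equivalent to $\nu_0\ge\tfrac12\bigl(\gamma-(2-\tfrac2\alpha)\bigr)$. The hypothesis $\gamma>2-2/\alpha$ ensures this lower bound lies strictly inside the admissible window $(0,\gamma-2+2/\alpha)$ for $\nu_0$, so one \emph{can} choose $\nu_0$ to make $\kappa_2\le 0$; but it is a choice, not an automatic consequence. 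Your fallback of ``absorbing a stray positive power into the exponential'' does not actually save the case $\kappa_d>0$: if $\kappa_d>0$ then on any coordinate hyperplane $x_j=\xi_j$ the right-hand side of \eqref{e.Y-product-b} vanishes identically while the left-hand side does not, so the inequality is simply false and no absorption can repair it. The correct fix is to record that $\nu_0$ must be taken in the upper half of its range when $d=2$; with that adjustment your argument is complete.
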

\begin{lemma}\label{est.z_k} 
\[
\sup_{t, x}\left|\int_{\mathbb{R}^d}Z_{k+1}(t, x-\xi)u_{k}(t, \xi)d\xi\right| \leq C\, \qquad k=0, 1.
\]
\end{lemma}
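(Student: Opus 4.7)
The plan is to reduce the claim to a uniform $L^1$-estimate on the kernel $Z_{k+1}$. Since each initial datum $u_k$ is bounded, one can pull $\|u_k\|_\infty$ out of the integral, so it suffices to show
\[
\sup_{(t,x)\in(0,T]\times\RR^d}\int_{\RR^d}|Z_{k+1}(t,x-\xi)|\,d\xi<\infty,\qquad k=0,1.
\]
I would then use the parametrix decomposition $Z_{k+1}(t,x-\xi)=Z_{k+1}^{0}(t,x-\xi,\xi)+V_{Z_{k+1}}(t,x;\xi)$ and estimate the two summands separately by combining the pointwise bounds of Lemmas \ref{est.z_0}, \ref{1est.v_z_1}, \ref{1est.v_z_2} with the scaling change of variables $y=t^{-\alpha/2}(x-\xi)$ (whose Jacobian is $t^{\alpha d/2}$).

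For the principal part $Z_{k+1}^{0}$, the Jacobian cancels exactly the factor $t^{-\alpha d/2}$ coming from Lemma \ref{est.z_0} (with an extra $t$ left over when $k=1$), and the matter reduces to verifying
\[
\int_{\RR^d}\mu_d(|y|)\,\exp\bigl\{-\sigma|y|^{2/(2-\alpha)}\bigr\}\,dy<\infty.
\]
This is immediate for $d=1$ since $\mu_1\equiv 1$; for $d=2$ the logarithmic singularity at the origin is locally integrable and the exponential gives rapid decay at infinity; for $d\ge 3$, polar coordinates turn $|y|^{2-d}$ into $C\int_0^\infty r\,e^{-\sigma r^{2/(2-\alpha)}}\,dr<\infty$. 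Thus $\int |Z_1^{0}|\,d\xi\le C$ and $\int|Z_2^{0}|\,d\xi\le Ct$, both uniform on $(0,T]\times\RR^d$.

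For the remainder $V_{Z_{k+1}}$, the same change of variables applies. When $d=1$, the integral of $p(t,x-\xi)$ contributes $Ct^{\alpha/2}$, producing the bounds $Ct^{\alpha\gamma/2}$ for $k=0$ and $Ct^{1+\alpha\gamma/2}$ for $k=1$, both bounded on $(0,T]$ since $\gamma>0$. When $d\ge 2$, the singular factor becomes $|y|^{-d+\gamma-\nu_1+2-\nu_0}$, which is integrable at the origin precisely because $\gamma-\nu_1+2-\nu_0>0$ by the choice of $\nu_0,\nu_1$. A short algebraic computation, using the identity $\nu_0\alpha=\nu_1\alpha-2\alpha+2$, then shows that the $t$-exponent collected from the prefactor, the Jacobian, and the scaling of $|x-\xi|^{-d+\gamma-\nu_1+2-\nu_0}$ simplifies to $\alpha\gamma/2$ for $k=0$ and to $1+\alpha(\gamma-\nu_1)/2$ for $k=1$.

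The main point of care, and essentially the only real obstacle, is precisely this bookkeeping of $t$-exponents in the $d\ge 2$ case: each exponent has to be non-negative so that the bound does not blow up as $t\downarrow 0$, which is guaranteed by the standing assumption $\gamma>\nu_1>2-2/\alpha$. The $d=1$ case and the estimates on the principal parts are essentially routine scaling arguments.
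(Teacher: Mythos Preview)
Your proposal is correct and follows essentially the same line as the paper: decompose $Z_{k+1}=Z_{k+1}^{0}+V_{Z_{k+1}}$, pull out $\|u_k\|_\infty$, and integrate each piece using the pointwise kernel bounds together with the scaling substitution $y=t^{-\alpha/2}(x-\xi)$. The only difference is that for the principal part the paper appeals directly to the exact identities $\int_{\RR^d}Z_1^{0}\,d\xi=1$ and $\int_{\RR^d}Z_2^{0}\,d\xi=t$ from \cite{Koc}, whereas you obtain the same conclusion by bounding $\int_{\RR^d}|Z_{k+1}^{0}|\,d\xi$ via Lemma~\ref{est.z_0}; both arguments are valid and yours is slightly more self-contained.
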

\begin{proof}
First recall that $u_{k}(x)$ are bounded. Thanks to the following fact from \cite{Koc}
$$\int_{\mathbb{R}^d}Z_1^0(t, x, \xi)d\xi =1 \quad \text{and}\quad \int_{\mathbb{R}^d}Z_2^0(t, x, \xi)d\xi =t ,$$ we only need to show
$$\sup_{t, x}\int_{\mathbb{R}^d}V_{Z_{j}}(t, x, \xi)\ud\xi \leq C,$$ since $u_k$ are bounded.

Let's consider the case $d\geq3$ and $d=2$ for $V_{z_1}$ as examples. When $d\geq3$, 
by the estimation of $V_{Z_1}$ in Lemma \ref{est.z_0},  we have
\begin{eqnarray*}
\int_{\mathbb{R}^d}|V_{Z_1}(t, x, \xi) |d\xi
&\leq&  \int_{\mathbb{R}^d}   Ct^{-\frac{\alpha d}{2}}  \mu_d(t^{-\frac{\alpha }{2}}|x-\xi|)p(t, x-\xi) dy\\
&\leq&  \int_{\mathbb{R}^d}   Ct^{-\frac{\alpha d}{2}+d}  \mu_d(z)p(t, x-\xi) dz\\
&\leq& Ct^{-\frac{\alpha d}{2}+d}\\
&\leq& C,
\end{eqnarray*}
due to the fact $ t\in(0, T].$
\

For the case $d=2, Z_1$, notice that $$\forall \theta>0, \exists C>0 \quad s.t. \quad (\log|z|+1)<c|z|^{\theta},  $$ as shown in the case of d=4 in the proof of \ref{1est.y}. Then the above argument ends proof.
The proof for the rest of the cases is almost the same, so we omit it.

\end{proof}

\section{Miscellaneous estimations}
\
For fractional kernel, we need the following estimation, which is immediate from Corollary 15 of \cite{HuHu15}.
\begin{lemma}\label{basic.ineq1}
Let $0<r, s\le T$ and 
\begin{equation}\label{condtion.h2}
 2H_i+\frac{2\kappa_d}{d}>0.
 \end{equation}
 Then for any
$\rho_1, \tau_2 \in \mathbb{R},  \rho_1\neq\tau_2$,   we have
\begin{eqnarray*}
\int_{\mathbb{R}^2}|\rho_1-\tau_1|^{2H_i-2}|\rho_2-\rho_1|^{\frac{\kappa_d}{d}}|\tau_2-\tau_1|^{\frac{\kappa_d}{d}}p(s, \rho_2-\rho_1)p(r, \tau_2-\tau_1)\ud\rho_1 \ud\tau_1\leq C (s\: r)^{\theta_i},
\end{eqnarray*} where
$$\theta_i=
\begin{cases}
C (s\: r)^{\frac{H_i d+\kappa_d}{2d}\alpha}\,,& \qquad 2H_i-2+\kappa_d/d\not =-1;\\
C (s\: r)^{\frac{d\epsilon+\kappa_d+d }{4d}\alpha}\,,&  \qquad2H_i-2+\kappa_d/d= -1\,.
\end{cases} \nonumber\\$$

\end{lemma}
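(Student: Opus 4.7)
The lemma is asserted to be an immediate consequence of Corollary~15 of \cite{HuHu15}, so the plan is to verify that the reduction carries over to the regime $\alpha\in(1,2)$. The structural fact that makes this work is the self-similarity $p(t,x)=p_0(x/t^{\alpha/2})$ with $p_0(z)=\exp(-\sigma|z|^{2/(2-\alpha)})$ decaying faster than any polynomial; this is valid for every $\alpha\in(0,2)$, so the estimate in \cite{HuHu15}, proved for $\alpha\in(1/2,1)$, applies formally verbatim once the appropriate $\kappa_d,\zeta_d$ from Lemma~\ref{1est.y} are substituted in.

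Concretely, I would perform the substitution $u=(\rho_2-\rho_1)/s^{\alpha/2}$ and $v=(\tau_2-\tau_1)/r^{\alpha/2}$. This produces the Jacobian $s^{\alpha/2}r^{\alpha/2}$, rewrites the spatial weights as $s^{\alpha\kappa_d/(2d)}r^{\alpha\kappa_d/(2d)}|u|^{\kappa_d/d}|v|^{\kappa_d/d}$, and converts the fractional kernel into $|(\rho_2-\tau_2)-s^{\alpha/2}u+r^{\alpha/2}v|^{2H_i-2}$. The resulting $(u,v)$ integral against $p_0(u)p_0(v)$ is precisely of the form that Corollary~15 of \cite{HuHu15} bounds: condition \eqref{condtion.h2}, namely $2H_i+2\kappa_d/d>0$, is exactly local integrability of the combined singularity at the origin (as one sees by going to polar coordinates, where the integrand behaves like $\rho^{2H_i-2+2\kappa_d/d+1}$), and a further scaling extraction in the fractional factor pulls out the additional $(sr)^{\alpha(H_i-1)/2}$. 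Multiplying the prefactors then collapses to the generic exponent $\alpha(H_id+\kappa_d)/(2d)$.

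The main, and essentially only, subtle point is the borderline case $2H_i-2+\kappa_d/d=-1$, in which the $(u,v)$ integral develops a logarithmic divergence rather than the expected clean power. Following the treatment in \cite{HuHu15}, I would absorb the logarithm by sacrificing an arbitrarily small $\epsilon>0$ of polynomial decay borrowed from $p_0$, which replaces the sharp exponent by the slightly worse $\alpha(d\epsilon+\kappa_d+d)/(4d)$. With this bookkeeping in place, the lemma reduces to a direct quotation of the cited corollary, and no new argument specific to $\alpha\in(1,2)$ is required beyond recording the exponent $\kappa_d/d$ from Lemma~\ref{1est.y}.
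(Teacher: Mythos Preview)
Your proposal is correct and follows essentially the same route as the paper: both reduce directly to Corollary~15 of \cite{HuHu15} with the substitution $\theta_1=2H_i-2$, $\theta_2=\kappa_d/d$, and both handle the borderline case $2H_i-2+\kappa_d/d=-1$ by trading the resulting logarithm for an $\epsilon$-power. The only minor mechanical difference is that the paper absorbs the log \emph{after} invoking the corollary, via the elementary bound $|\log r|\le C r^{\epsilon}$ for $0<r\le T$ and $\epsilon<0$, whereas you propose to borrow the $\epsilon$ from the super-polynomial decay of $p_0$ inside the integral; both lead to the same exponent.
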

\begin{proof}
In Corollary 15 of \cite{HuHu15}, let $\theta_1=2H_i-2, \theta_2=\kappa_d/d$. Then notice that for $0<r\le T$
$$\forall \epsilon<0, \exists\: C>0, \quad s.t. \quad \log r<Cr^\epsilon. $$
\end{proof}

The next lemma can be proved as in Lemma 11 of \cite{HuHu15}.
\begin{lemma}\label{intes}
Let $-1<\beta\leq0,  x\in \mathbb{\RR}^d$.    Then, there is a constant $C$, dependent on
$\sigma$,  $\alpha$ and $\beta$,  but independent of $\xi$ and $s$ such that
\[
\int_{\mathbb{R}^d}|x|^{\beta}
 p(s, x-\xi)d x\leq C s^{\frac{\alpha\beta}{2}+\frac{\alpha}{2}d}\,.
\]
\end{lemma}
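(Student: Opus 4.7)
The plan is a rescaling argument that reduces the estimate to a dimensionless one, followed by a domain split to achieve uniformity in $\xi$.

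First I would translate by $y=x-\xi$ and then apply the parabolic rescaling $y=s^{\alpha/2}z$. These two changes of variable turn the integral into
\[
\int_{\RR^d}|x|^{\beta}p(s,x-\xi)\ud x \;=\; s^{\alpha d/2}\int_{\RR^d}\left|s^{\alpha/2}z+\xi\right|^{\beta}\exp\left\{-\sigma|z|^{2/(2-\alpha)}\right\}\ud z.
\]
A further translation $u=z+\xi/s^{\alpha/2}$ absorbs the shift into the exponential and pulls $s^{\alpha\beta/2}$ out of the polynomial factor, so the claim reduces to the uniform bound
\[
J(\eta):=\int_{\RR^d}|u|^{\beta}\exp\left\{-\sigma|u-\eta|^{2/(2-\alpha)}\right\}\ud u \;\leq\; C,
\]
with $C$ independent of $\eta\in\RR^d$ (where $\eta=\xi/s^{\alpha/2}$).

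To establish this uniform bound I would split $\RR^d$ at $|u|=1$. On $\{|u|\le1\}$ the exponential factor is trivially bounded by $1$, and because $\beta>-1\ge -d$ the singular factor $|u|^{\beta}$ is locally integrable, so this piece contributes a finite constant that does not depend on $\eta$. On $\{|u|>1\}$ the hypothesis $\beta\le 0$ gives $|u|^{\beta}\le 1$, so what remains is the translation-invariant integral $\int_{\RR^d}\exp\{-\sigma|u-\eta|^{2/(2-\alpha)}\}\ud u$, which equals a finite constant depending only on $\sigma$ and $\alpha$. Combining the two pieces yields $J(\eta)\le C$, and unwinding the substitutions gives the stated estimate $Cs^{\alpha\beta/2+\alpha d/2}$.

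The only delicate point is the uniformity in $\xi$, and this is precisely what the splitting purchases: the $\eta$-dependence in the exponential is simply discarded on the bounded singular region (where it only helps as an upper bound), while on the complement the stretched-exponential tail is integrable and translation invariant. I do not expect any further obstacle; the hypothesis $\beta>-1$ is used only to guarantee integrability of $|u|^{\beta}$ near the origin in every dimension $d\ge 1$.
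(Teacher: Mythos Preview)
Your argument is correct. The two changes of variable reduce the claim to the uniform bound $\sup_{\eta}J(\eta)<\infty$, and the split at $|u|=1$ handles this cleanly: local integrability of $|u|^{\beta}$ near the origin uses $\beta>-1\ge -d$, while on $\{|u|>1\}$ the factor $|u|^{\beta}\le 1$ and the remaining stretched-exponential integral is translation invariant and finite because $2/(2-\alpha)>0$ for $\alpha\in(1/2,1)\cup(1,2)$.

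The paper does not give its own proof of this lemma; it simply refers the reader to Lemma~11 of \cite{HuHu15}. Your rescaling-and-splitting argument is the natural way to carry this out and is almost certainly what that reference does as well, so there is no substantive difference in approach to report.
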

For Bessel kernel, we need the following lemma.
\begin{lemma}\label{basic.ineq2}
Assume $0<s, r\leq T$ and $y_1, y_2,  z_1, z_2\in\RR^d$, we have that
\[
\int_{\RR^{2d}} \left|Y(r, y_1-y_2) Y(s, z_1-z_2)\right|\: \int_0^\infty \omega^{-\frac{\kappa}{2}-1}e^{-\omega} e^{\frac{-|y_1-z_1|^2}{4\omega}}d\omega \ud y_1 \ud z_1
\leq C \cdot(r\: s)^\ell,
\] where 
\[
\ell:=\zeta_d-\frac{\alpha}{4}\kappa+\frac{\alpha}{2}\kappa_d+\frac{\alpha}{2}d
\]
\end{lemma}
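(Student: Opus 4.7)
The plan is to exploit the fact that $\Lambda$ is a non-negative-definite kernel (its Fourier transform is a positive multiple of $(1+|\xi|^2)^{-(d-\kappa)/2}$), which makes the bilinear form $B(F,G):=\int_{\RR^{2d}} F(y)G(z)\Lambda(y-z)\,dy\,dz$ a positive-semidefinite pairing. Applying the Cauchy--Schwarz inequality for this pairing to $F(y)=|Y(r,y-y_2)|$ and $G(z)=|Y(s,z-z_2)|$ decouples the variables:
\[
\int_{\RR^{2d}}|Y(r,y_1-y_2)Y(s,z_1-z_2)|\Lambda(y_1-z_1)\,dy_1\,dz_1\;\leq\;D_r^{1/2}\,D_s^{1/2},
\]
where $D_t := \int_{\RR^{2d}}|Y(t,u)||Y(t,u')|\Lambda(u-u')\,du\,du'$ (the points $y_2,z_2$ drop out after the shifts $u=y_1-y_2$, $u'=y'_1-y_2$, since $\Lambda$ depends only on differences). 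It then suffices to prove the diagonal bound $D_t\leq C\,t^{2\ell}$.

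For the diagonal estimate, I would first insert the pointwise bound $|Y(t,u)|\leq Ct^{\zeta_d}|u|^{\kappa_d}p(t,u)$ from Lemma~\ref{1est.y} and then rescale $u=t^{\alpha/2}p$, $u'=t^{\alpha/2}q$. This produces a factor $t^{\alpha d}$ from the Jacobian, $t^{\alpha\kappa_d}$ from $|u|^{\kappa_d}|u'|^{\kappa_d}$, and replaces $p(t,u)$ by the $t$-independent super-Gaussian weight $e^{-\sigma|p|^{2/(2-\alpha)}}$. For the Bessel kernel, inserting its integral representation and substituting $\omega=t^\alpha\tilde\omega$, the trivial bound $e^{-t^\alpha\tilde\omega}\leq 1$ on $t\in(0,T]$ followed by the identification $\int_0^\infty\tilde\omega^{-\kappa/2-1}e^{-|p-q|^2/(4\tilde\omega)}\,d\tilde\omega=C\,|p-q|^{-\kappa}$ yields
\[
\Lambda\bigl(t^{\alpha/2}(p-q)\bigr)\leq C\,t^{-\alpha\kappa/2}\,|p-q|^{-\kappa}.
\]
Collecting all powers of $t$ gives exactly $D_t\leq C\,t^{\,2\zeta_d+\alpha\kappa_d+\alpha d-\alpha\kappa/2}=C\,t^{2\ell}$, so that $I\leq D_r^{1/2}D_s^{1/2}\leq C\,(rs)^\ell$.

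The main obstacle is verifying that the residual dimensionless integral
\[
\int_{\RR^{2d}}|p|^{\kappa_d}|q|^{\kappa_d}\,e^{-\sigma|p|^{2/(2-\alpha)}}\,e^{-\sigma|q|^{2/(2-\alpha)}}\,|p-q|^{-\kappa}\,dp\,dq
\]
is finite. Behaviour at infinity is immediate from the super-Gaussian weights (since $2/(2-\alpha)>2$ for $\alpha\in(1,2)$); near $p=0$ and $q=0$ one needs $\kappa_d>-d$, which follows from the standing assumption $\gamma>2-2/\alpha$ through the formula $\kappa_d=-d+\gamma-2\nu_0+2/\alpha$ with $\nu_0=\nu_1-2+2/\alpha$; integrability of the Riesz singularity along the diagonal $p=q$ requires $\kappa<d$, which is explicit in the hypotheses. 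The case $d=1$ is handled identically with $\kappa_d=0$, reducing the dimensionless integral to a one-dimensional correlation with a Riesz weight that converges since $\kappa<d=1$.
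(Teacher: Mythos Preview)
Your approach is correct in outline and genuinely different from the paper's. The paper never invokes Cauchy--Schwarz for the positive-definite form or a global rescaling; instead it integrates $y_1$ first to obtain an inner integral $I=I(\omega,z_1)$, bounds $I$ in two ways (dropping either the super-Gaussian or the heat kernel factor), splits the $\omega$-integral at $\omega=r^{\alpha}$, and then integrates in $z_1$ via Lemma~\ref{intes}, finishing by a symmetry argument. Your route---reduce to the diagonal $D_t$ by Cauchy--Schwarz, rescale by $t^{\alpha/2}$, and dominate the Bessel kernel by a Riesz kernel after dropping $e^{-t^{\alpha}\tilde\omega}$---is cleaner and makes the exponent $2\ell$ appear transparently from scaling.

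There is, however, a gap in your finiteness check for the residual integral
\[
J=\int_{\RR^{2d}}|p|^{\kappa_d}|q|^{\kappa_d}\,e^{-\sigma|p|^{2/(2-\alpha)}}e^{-\sigma|q|^{2/(2-\alpha)}}\,|p-q|^{-\kappa}\,dp\,dq.
\]
You verify integrability across each singular set $\{p=0\}$, $\{q=0\}$, $\{p=q\}$ separately, but these three codimension-$d$ varieties meet at the origin, where all three factors blow up simultaneously. A scaling (or blow-up) argument there gives the additional requirement $2(\kappa_d+d)>\kappa$, which does \emph{not} follow from $\kappa_d>-d$ and $\kappa<d$ alone once $d$ is moderately large. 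This extra condition is, fortunately, satisfied under the hypotheses actually used in Theorem~\ref{main.theorem}: for $\alpha\in(1,2)$ and $d\ge 2$ one has $\kappa_d+d=\gamma-2\nu_0+2/\alpha$, and with $\nu_0$ chosen small the assumption $\gamma>2-2/\alpha$ gives $2(\kappa_d+d)>4-2/\alpha>\kappa$; the case $d=1$ is immediate since $\kappa_d=0$ and $\kappa<1$. So the gap is easily closed, but you should state and verify this compound condition explicitly.
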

\begin{proof}
Recall that the estimation of $Y(t, x)$ in Lemma 9 of  \cite{HuHu15} and \eqref{1e.Y-bound} has the following form:
$$Cs^{\zeta_d}|x|^{\kappa_d}p(t, x).$$
By substituting $Y$, we have
\begin{align*}
\int_{\RR^{2d}} \left|Y(r, y_1-y_2)Y(s, z_1-z_2)\right|\: \int_0^\infty \omega^{-\frac{\kappa}{2}-1}e^{-\omega} e^{\frac{-|y_1-z_1|^2}{4\omega}}d\omega \ud y_1 \ud z_1
\end{align*}
$$\leq C\int_{\RR^{d}}s^{\zeta_d}|z_2-z_1^{\kappa_d}|p(s, z_2-z_1)r^{\zeta_d}\int_0^\infty I\:\cdot \omega^{-\frac{\kappa}{2}-1}e^{-\omega} \ud\omega \ud z_1,$$

where
$$I:=\int_{\RR_d}|y_2-y_1|^{\kappa_d} \exp \left\{-\sigma \left|\frac {y_2-y_1}{r^{\frac\alpha2}}\right|^{\frac{2}{2-\alpha}}\right\}\exp \left\{-\frac {|y_1-z_1|^2}{4\omega}\right\}\ud y_1.$$

For $I$, we have two estimations:
\begin{align*}
I&\leq \int_{\RR_d}|y_2-y_1|^{\kappa_d} \exp \left\{-\sigma \left|\frac {y_2-y_1}{r^{\frac\alpha2}}\right|^{\frac{2}{2-\alpha}}\right\}\ud y_1\\
&\leq C r^{\frac{\alpha}{2}\kappa_d+\frac{\alpha}{2}d},
\end{align*}
and
\begin{align*}
I&\leq \int_{\RR_d}|y_2-y_1|^{\kappa_d} \exp \left\{-\frac {|y_1-z_1|^2}{4\omega}\right\}\ud y_1\\
&\leq C \omega^{\frac{\kappa_d}{2}+\frac{d}{2}},
\end{align*}
by  Lemma \ref{intes}.

With the estimations of $I$, we have 
\begin{align*}
\int_0^{\infty} I\:\cdot \omega^{-\frac{\kappa}{2}-1}e^{-\omega}\ud\omega&=\int_0^{r^\alpha} I\:\cdot \omega^{-\frac{\kappa}{2}-1}e^{-\omega} \ud\omega+\int_{r^\alpha}^\infty I\:\cdot \omega^{-\frac{\kappa}{2}-1}e^{-\omega}\ud\omega\\
&\leq r^{\frac{\alpha}{2}\kappa_d+\frac{\alpha}{2}d-\frac{\alpha}{2}\kappa}+ \int^\infty_{r^\alpha}\omega^{\frac{\kappa_d}{2}+\frac{d}{2}}\: \omega^{-\frac{\kappa}{2}-1}e^{-\omega}\ud\omega.
\end{align*}

For $\int^\infty_{r^\alpha}\omega^{\frac{\kappa_d}{2}+\frac{d}{2}}\: \omega^{-\frac{\kappa}{2}-1}e^{-\omega}\ud\omega$, 

\

if $\frac{\kappa_d}{2}+\frac{d}{2}-\frac{\kappa}{2}<0$
\begin{align*}
\int^\infty_{r^\alpha}  \omega^{\frac{\kappa}{2}+\frac{d}{2}}\: \omega^{-\frac{\kappa_d}{2}-1}e^{-\omega}\ud\omega&\leq \int^\infty_{r^\alpha}  \omega^{\frac{\kappa_d}{2}+\frac{d}{2}-\frac{\kappa}{2}-1}\ud\omega\\
&=C r^{\alpha(\frac{\kappa_d}{2}+\frac{d}{2}-\frac{\kappa}{2})};
\end{align*}

if $\frac{\kappa_d}{2}+\frac{d}{2}-\frac{\kappa}{2}\geq0$
\begin{align*}
\int^\infty_{r^\alpha}  \omega^{\frac{\kappa}{2}+\frac{d}{2}}\: \omega^{-\frac{\kappa_d}{2}-1}e^{-\omega}\ud\omega&= \int^\infty_{r^\alpha}  \omega^{\frac{\kappa}{2}+\frac{d}{2}-\frac{\kappa_d}{2}-1}e^{-\omega}\ud\omega\\
&= C\\
&\leq C r^{\alpha(\frac{\kappa_d}{2}+\frac{d}{2}-\frac{\kappa}{2})}.
\end{align*}

Therefore we end up with 
$$\int_0^\infty I\:\cdot \omega^{-\frac{\kappa}{2}-1}e^{-\omega} \ud\omega\leq C r^{\alpha(\frac{\kappa_d}{2}+\frac{d}{2}-\frac{\kappa}{2})}.$$

The estimation of integration with respect to $z_1$ is straightforward thank to fact that C is independent of $z_1$.

We have
$$\int_{\RR^{d}}s^{\zeta_d}|z_2-z_1|p(s, z_2-z_1)r^{\zeta_d}\int_0^\infty I\:\cdot \omega^{-\frac{\kappa}{2}-1}e^{-\omega} \ud\omega \ud z_1 $$
$$\leq Cr^{\alpha(\frac{\kappa_d}{2}+\frac{d}{2}-\frac{\kappa}{2})}\cdot r^{\zeta_d}\cdot s^{\alpha(\frac{\kappa_d}{2}+\frac{d}{2}-\frac{\kappa}{2})} s^{\zeta_d},$$
by  Lemma \ref{intes}.
\

By symmetry,  we have
$$\int_{\RR^{2d}} \left|Y(r, y_1-y_2) Y(s, z_1-z_2)\right|\: \int_0^\infty \omega^{-\frac{\kappa}{2}-1}e^{-\omega} e^{\frac{-|y_1-z_1|^2}{4\omega}}d\omega \ud y_1 \ud z_1 $$
$$\leq C s^{\alpha(\frac{\kappa_d}{2}+\frac{d}{2}-\frac{\kappa}{2})}\cdot s^{\zeta_d}\cdot r^{\alpha(\frac{\kappa_d}{2}+\frac{d}{2}-\frac{\kappa}{2})} r^{\zeta_d}.$$

Combining the two estimations we get the estimation in the lemma.
\end{proof}

The following lemma is Theorem 3.5 from \cite{BC3}.
\begin{lemma}
\label{stint}
Let $T_n(t)=\{s=(s_1, \ldots,s_n):\: 0<s_1<s_2 < \ldots <s_n<t\}$. Then 
\[
 \int_{T_n(t)} [(t-s_n)(s_n-s_{n-1}) \ldots
(s_2-s_1)]^{h} \ud  s =\frac{\Gamma(1+h)^{n}}{\Gamma(n(1+h)+1)}
t^{n(1+h)},
\]
if and only if $1+h>0$.
\end{lemma}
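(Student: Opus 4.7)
The plan is to prove this classical Dirichlet/Selberg-type identity by induction on $n$, reducing each step to the Beta function integral
\[
\int_0^t (t-s)^a s^b \, ds \;=\; t^{a+b+1}\, B(a+1,b+1) \;=\; t^{a+b+1}\,\frac{\Gamma(a+1)\Gamma(b+1)}{\Gamma(a+b+2)},
\]
which is valid precisely when $a,b>-1$. This already explains why the condition $1+h>0$ is necessary: the very first integration over $s_1$, namely $\int_0^{s_2}(s_2-s_1)^h\,\ud s_1$, is finite if and only if $h>-1$, and the same constraint propagates to all iterated integrals.

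For the sufficiency, I would check the base case $n=1$ directly, where $\int_0^t (t-s_1)^h\,\ud s_1 = t^{1+h}/(1+h) = \Gamma(1+h)\,t^{1+h}/\Gamma(2+h)$, matching the claimed formula. For the inductive step, assume the identity for $n-1$ and fix the outermost variable $s_n$. The region $T_n(t)$ factors as $\{0<s_n<t\}\times T_{n-1}(s_n)$, and the integrand factors as $(t-s_n)^h$ times the product $[(s_n-s_{n-1})\cdots(s_2-s_1)]^h$, which depends only on $s_1,\dots,s_{n-1}$ (with upper endpoint $s_n$). Applying the inductive hypothesis to the inner $(n-1)$-fold integral yields
\[
\int_{T_{n-1}(s_n)}[(s_n-s_{n-1})\cdots(s_2-s_1)]^h\,\ud s_1\cdots\ud s_{n-1}
= \frac{\Gamma(1+h)^{n-1}}{\Gamma((n-1)(1+h)+1)}\,s_n^{(n-1)(1+h)}.
\]

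It then remains to evaluate the one-dimensional integral
\[
\int_0^t (t-s_n)^h\, s_n^{(n-1)(1+h)}\,\ud s_n
= t^{n(1+h)}\,\frac{\Gamma(1+h)\,\Gamma((n-1)(1+h)+1)}{\Gamma(n(1+h)+1)},
\]
via the Beta identity, which is legitimate precisely because $h>-1$ and $(n-1)(1+h)>-1$. Multiplying the two factors collapses the $\Gamma((n-1)(1+h)+1)$ terms and produces $\Gamma(1+h)^n/\Gamma(n(1+h)+1)\cdot t^{n(1+h)}$, completing the induction.

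The only place where care is needed is tracking the sharp condition: one should observe at each stage that the exponent $(k-1)(1+h)$ on $s$ inside the Beta integral satisfies $(k-1)(1+h)>-1$, which holds automatically under $1+h>0$. Conversely, if $1+h\le 0$, then already for $n=1$ the integral is $+\infty$ (or, for $n\ge2$, the innermost $s_1$-integral diverges for almost every $s_2$), so the identity fails in the necessary direction. No serious obstacle is expected; this is a standard computation and the proof amounts to bookkeeping with the Beta function.
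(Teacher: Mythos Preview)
Your induction via the Beta integral is correct and is the standard proof of this Dirichlet-type identity. Note that the paper does not actually supply a proof of this lemma---it simply quotes it as Theorem~3.5 of \cite{BC3}---so there is no argument in the paper to compare against; your write-up fills that gap cleanly.
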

\section{Existence and uniqueness of the solution}
\begin{theorem}\label{main.theorem}
Assume the following conditions:
\begin{enumerate}[(1)]
\item $\lambda(t)$ is a nonnegative definite locally integrable function\,;
\item $\alpha \in (1/2,1)\cup(1, 2)$.
\end{enumerate}
Then relation \eref{eq: L2 chaos} holds for each $(t,x)$, if  any of the following is true.
Consequently, equation \eref{E:SPDE} admits a unique mild solution in the sense of Definition \ref{D:Sol}.
\begin{enumerate}[(i)]

\item $\Lambda(x)$ is fractional kernel with condition:
\begin{equation*}
H_i>\begin{cases}
\frac12,  &\qquad \hbox{}\  \ d=1, 2, 3, 4\\
1-\frac{2}{d}-\frac{\gamma}{2d},&\qquad \hbox{}\ \ d\ge 5, \alpha\in(0, 1) \\
1-\frac{2}{d},&\qquad \hbox{}\ \ d\ge 5, \alpha\in(1, 2) \\

\end{cases}  
\label{e.cond.main-hi}
 \end{equation*}
 and
 \begin{equation*}
 \sum_{i=1}^{d}H_i>d-2+\frac1\alpha \,. 
\label{e.cond.main}
 \end{equation*}
\item $\Lambda(x)$ is the Reisz or Bessel  kernel  with condition:
\[
\kappa< 4-2/\alpha;
\]
\end{enumerate}
\end{theorem}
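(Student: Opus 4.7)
The plan is to verify the $L^2$-convergence statement \eqref{eq: L2 chaos}; once this holds, existence and uniqueness in the sense of Definition \ref{D:Sol} follow from the standard chaos-expansion argument recalled in Section \ref{Sec:Pre} and carried out in \cite{HuHu15}. I first identify the kernels explicitly. Iterating the mild formulation \eqref{E:mild} in the Wiener chaos yields
\[
f_n(s_1,y_1,\ldots,s_n,y_n;t,x)
= \operatorname{sym}\Bigl\{J_0(s_1,y_1)\,Y(t-s_n,x-y_n)\prod_{k=1}^{n-1}Y(s_{k+1}-s_k,y_{k+1}-y_k)\Bigr\},
\]
where $\operatorname{sym}$ denotes symmetrization over the pairs $(s_j,y_j)$. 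Since $J_0$ is uniformly bounded by Lemma \ref{est.z_k}, its presence contributes only a harmless constant in each order. Expanding $n!\|f_n\|_{\mathcal{H}^{\otimes n}}^2$ against the noise covariance $\prod_k\lambda(s_k-r_k)\Lambda(y_k-z_k)$ and absorbing the time correlations into $C_t^n$ using \eqref{E:Ct} reduces the task to controlling, on a single ordered simplex $T_n(t)$, a spatial integral of two copies of $\prod_k|Y|$ paired via $\prod_k\Lambda$.

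For the fractional kernel I substitute the product form \eqref{e.Y-product-b} of $Y$ into that spatial integral. Iterating Lemma \ref{basic.ineq1} coordinate by coordinate and over successive pairs of indices produces on each coordinate a factor $((s_{k+1}-s_k)(r_{k+1}-r_k))^{\theta_i}$; the hypothesis \eqref{condtion.h2} required by Lemma \ref{basic.ineq1} is exactly the case-dependent per-coordinate bound \eqref{e.cond.main-hi} on $H_i$. Combining across the $d$ coordinates and the $n$ successive pairs, the remaining ordered time integral fits Lemma \ref{stint} with an exponent $h=2\zeta_d+\alpha\sum_i(H_i+\kappa_d/d)$, producing the factor $\Gamma(1+h)^n/\Gamma(n(1+h)+1)\cdot t^{n(1+h)}$. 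The series $\sum_n n!\|f_n\|^2_{\mathcal{H}^{\otimes n}}$ then converges as soon as $1+h>0$, and unwinding $\zeta_d,\kappa_d$ from \eqref{zeta2}--\eqref{kappa2} collapses this threshold to the claimed $\sum_i H_i>d-2+1/\alpha$, namely \eqref{e.cond.main}.

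For the Riesz or Bessel kernel I proceed identically, but apply Lemma \ref{basic.ineq2} in place of Lemma \ref{basic.ineq1} on each pair of indices (the Riesz case reduces via a scaling/Parseval computation to the same time exponent $\ell$). The Lemma \ref{stint} threshold $1+2\ell>0$, after substituting $\zeta_d,\kappa_d$ from \eqref{zeta2}--\eqref{kappa2} and $\ell$ from Lemma \ref{basic.ineq2}, simplifies to exactly $\kappa<4-2/\alpha$.

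The main obstacle will be the careful bookkeeping across the dimensional regimes $d=1$, $2\le d\le 4$, and $d\ge 5$: although the packaged exponents $\zeta_d,\kappa_d$ coincide for $d\ge 2$, the estimates of $Y$ in Lemma \ref{1est.y} rely on structurally different majorizations in each subcase, and the per-coordinate threshold \eqref{e.cond.main-hi} must be matched separately for $\alpha\in(1/2,1)$ and $\alpha\in(1,2)$. A second delicate point is checking that the reduction of the double ordered simplex to a single simplex with weight $C_t^n$ really yields the exponent $2\zeta_d$ (not $\zeta_d$) in the final $h$, so that the arithmetic ``$1+2\zeta_d+\cdots>0$'' lines up cleanly with the thresholds stated in the theorem.
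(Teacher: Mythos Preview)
Your overall strategy---chaos expansion, boundedness of $J_0$, the product bound \eqref{e.Y-product-b} on $Y$, iteration of Lemma~\ref{basic.ineq1}/\ref{basic.ineq2}, and Lemma~\ref{stint} for the final simplex integral---matches the paper's, and the arithmetic of the threshold $1+2\ell>0$ is correct. However, there is a genuine gap in how you pass from the double time integral to a single ordered simplex.

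The issue is this. In the spatial integral
\[
\int_{\RR^{2nd}} g_n(s,y,t,x)\,g_n(r,z,t,x)\prod_{i=1}^n\Lambda(y_i-z_i)\,\ud y\,\ud z,
\]
the chain $g_n(s,y,t,x)$ is ordered according to the permutation $\sigma$ that sorts $s$, while $g_n(r,z,t,x)$ is ordered according to the generally \emph{different} permutation $\rho$ that sorts $r$. But $\Lambda(y_i-z_i)$ pairs $y_i$ with $z_i$ by the original index $i$, not the sorted one. When $\sigma\ne\rho$, the innermost variables of the two chains are $y_{\sigma(1)}$ and $z_{\rho(1)}$, which are coupled via $\Lambda$ to $z_{\sigma(1)}$ and $y_{\rho(1)}$ respectively; Lemma~\ref{basic.ineq1} cannot then be applied in the inside-out iteration you describe, and the clean factor $((s_{k+1}-s_k)(r_{k+1}-r_k))^{\theta_i}$ does not emerge. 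The paper handles this by first applying Cauchy--Schwarz in the $\Lambda$-bilinear form, bounding the mixed spatial integral by the geometric mean of the two diagonal ones (both copies at times $s$, respectively both at times $r$); in each diagonal term the orderings agree and the iteration of Lemma~\ref{basic.ineq1} is legitimate. Only afterwards does one use $ab\le(a^2+b^2)/2$ on $(\phi(s)\phi(r))^\ell$ and integrate out the second time variable against $\lambda$ to produce $C_t^n$ and a single simplex. Your description either reverses or conflates these steps (you speak of absorbing $C_t^n$ \emph{before} doing the spatial integral, yet the latter still depends on $r$), so as written the reduction is not justified.

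Two smaller points. For the Riesz kernel the paper does not use a Fourier/Parseval argument but the elementary pointwise bound $|x|^{-\kappa}\le\prod_{i=1}^d|x_i|^{-\kappa/d}$, which reduces directly to the fractional case with $2H_i-2=-\kappa/d$. And you should account for the boundary case $2H_i-2+\kappa_d/d=-1$ of Lemma~\ref{basic.ineq1}, where the exponent carries an extra $\varepsilon$ that has to be absorbed separately before comparing with the threshold $|H|>d-2+1/\alpha$.
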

\begin{proof}

Fix $t>0$ and $x\in \mathbb{R}^d$.

Let $$ (s, y, t, x):=(s_1, y_1,\cdots, s_n, y_n, t, x);$$
\[
g_n(s, y, t, x):=\frac{1}{n!}Y(t-s_{\sigma(n)}, x-y_{\sigma(n)} )\cdots Y(s_{\sigma(2)}-s_{\sigma(1)}, y_{\sigma(2)}-y_{\sigma(1)}) \,;
\]
\begin{align*}
 f_n(s, y, t, x):=g_n(s, y, t, x)J_0(s_{\sigma(1)}, x_{\sigma(1)}),
\end{align*}
where $\sigma$ denotes a permutation of  $ \{1,2,\cdots, n\}$ such that
$0<s_{\sigma(1)}<\cdots<s_{\sigma(n)}<t$.

By iteration of $u(t, x)$, we have 
\begin{multline}\label{E:fnNorm}
n! \| f_n(\cdot,\cdot,t,x)\|^2_{\mathcal{H}^{\otimes n}}\\
= n!\int_{[0,t]^{2n}} \ud s\ud r\int_{\mathbb{R}^{2nd}}\ud y\ud z\: f_n(s, y, t, x)f_n(r, z, t, x)\prod_{i=1}^n\Lambda(y_i-z_i)\prod_{i=1}^n\lambda(s_i-r_i),
\end{multline}
where $\ud y:=\ud y_1 \cdots \ud y_n$, the differentials $\ud z$, $\ud s$ and $\ud r$ are defined similarly.
Set $\mu(\ud\xi) : = \prod_{i=1}^{n} \mu(\ud\xi_i)$.

Recall that $J_0$ is bounded, so we have

\begin{multline*}
n! \| f_n(\cdot,\cdot,t,x)\|^2_{\mathcal{H}^{\otimes n}}\\
\leq C\frac{1}{n!}\int_{[0,t]^{2n}} \ud s\ud r\int_{\mathbb{R}^{2nd}}\ud y\ud z\: g_n(s, y, t, x)g_n(r, z, t, x)\prod_{i=1}^n\Lambda(y_i-z_i)\prod_{i=1}^n\lambda(s_i-r_i).
\end{multline*}
Furthermore by Cauchy-Schwarz inequality,

\begin{multline*}
\int_{\mathbb{R}^{2nd}}\ud y\ud z\: g_n(s, y, t, x)g_n(r, z, t, x)\prod_{i=1}^n\Lambda(y_i-z_i)\\
\leq \left\{\int_{\mathbb{R}^{2nd}}\ud y\ud z\: g_n(s, y, t, x)g_n(s, z, t, x)\prod_{i=1}^n\Lambda(y_i-z_i)\right\}^{1/2}\\
\cdot\left\{\int_{\mathbb{R}^{2nd}}\ud y\ud z\: g_n(r, y, t, x)g_n(r, z, t, x)\prod_{i=1}^n\Lambda(y_i-z_i)\right\}^{1/2}
\end{multline*}

(i)
Let $\Lambda(\cdot)=\varphi_{H}(\cdot)$ and use the estimation of $Y$ in Lemma \ref{1est.y}, we have
\begin{multline}\label{basin}
\int_{\mathbb{R}^{2nd}}\ud y\ud z\: g_n(s, y, t, x)g_n(s, z, t, x)\prod_{i=1}^n\Lambda(y_i-z_i)\\
\leq C \prod_{i=1}^d\int_{\mathbb{R}^{2n}}\prod_{k=1}^n\varphi_{H_i}(y_{ik}-z_{ik}) \Theta_{n}(t, y_{ik}, s)\Theta_{n}(t, z_{ik}, s)dy_idz_i
\end{multline}
where
$$
\Theta_{ n}(t, y_{ik}, s):= |s_{\sigma(k+1)}-s_{\sigma(k)}|^{\frac{\zeta_{d}}{d}}|y_{i\sigma(k+1)}-y_{i\sigma(k)}|^{\frac{\kappa_{d}}{d}}  p(s_{\sigma(k+1)}-s_{\sigma(k)}, y_{i\sigma(k+1)}-y_{i\sigma(k)});
$$

$$y_i=(y_{i1}, y_{i2}, \cdots, y_{ik}, \cdots,y_{in}), \qquad z_i=(z_{i1}, z_{i2}, \cdots, z_{ik}, \cdots,z_{in});$$

$$dy_i:=\prod_{k=1}^n dy_{ik}\qquad dz_i:=\prod_{k=1}^n dz_{ik}; $$

 and
 $$y_{\sigma(k+1)}=z_{\sigma(k+1)}:=x_{i}\,; \qquad s_{\sigma(n+1)}=r_{\sigma(n+1)}:=t.$$ 
 
Let's first consider the case $2H_i-2+\kappa_d/d\ne -1$. Applying Lemma \ref{basic.ineq1} to 
\begin{equation}\label{theta.int}
\int_{\mathbb{R}^{2n}}\prod_{k=1}^n\varphi_{H_i}(y_{ik}-z_{ik}) \Theta_{n}(t, y_{ik}, s)\Theta_{n}(t, z_{ik}, s)dy_idz_i
\end{equation}
for $dy_{i\sigma(1)}dz_{i\sigma(1)}$, we have

\begin{multline*}
\int_{\mathbb{R}^{2n}}\prod_{k=1}^n\varphi_{H_i}(y_{ik}-z_{ik}) \Theta_{n}(t, y_{ik}, s)\Theta_{n}(t, z_{ik}, s)dy_idz_i\\
\leq C(s_{i\sigma(2)}-s_{i\sigma(1)})^{2\ell_i}\int_{\mathbb{R}^{2n}}\prod_{k=2}^n\varphi_{H_i}(y_{ik}-z_{ik}) \Theta_{n}(t, y_{ik}, s)\Theta_{n}(t, z_{ik}, s)dy_idz_i
\end{multline*}
where  $$ \ell_i=\frac{\zeta_{d}}{d}+\theta_i \,.$$

Applying Lemma \ref{basic.ineq1} to \eqref{theta.int} for $dy_{i\sigma(k)}dz_{i\sigma(k)}, k=2,\cdots, n$,
we have 
$$ \prod_{i=1}^d\int_{\mathbb{R}^{2n}}\prod_{k=1}^n\varphi_{H_i}(y_{ik}-z_{ik}) \Theta_{n}(t, y_{ik}, s)\Theta_{n}(t, z_{ik}, s)dy_idz_i \leq \prod_{k=1}^nC^n(s_{\sigma(k+1)}-s_{\sigma(k)})^{2\ell}$$
where
\begin{equation}\label{de.l}
\ell =\sum_{i=1}^d \ell_i= \zeta_{d} + \frac{|H|\alpha}{2}     +\frac{\kappa_d\alpha }{2 }   \quad {\rm with}\quad
|H|=\sum_{i=1}^d H_i\,.
\end{equation}

Due to the same argument, we have
$$ \prod_{i=1}^d\int_{\mathbb{R}^{2n}}\prod_{k=1}^n\varphi_{H_i}(y_{ik}-z_{ik}) \Theta_{n}(t, y_{ik}, r)\Theta_{n}(t, z_{ik}, r)dy_idz_i \leq \prod_{k=1}^nC^n(r_{\rho(k+1)}-r_{\rho(k)})^{2\ell}$$
Therefore
\[
\int_{\mathbb{R}^{2nd}}\ud y\ud z\: g_n(s, y, t, x)g_n(r, z, t, x)\prod_{i=1}^n\Lambda(y_i-z_i) \leq C^n \; (\phi(s)\phi(r))^\ell,
\]
where 
\[
 \phi(s) :=\prod_{i=1}^{n}(s_{\sigma(i+1)}- s_{\sigma(i)}), \qquad \phi(r): = \prod_{i=1}^{n} (r_{\rho(i+1)}- r_{\rho(i)}),
\] 
with
\[ 
0<s_{\sigma(1)}<s_{\sigma(2)}<
\ldots < s_{\sigma(n)} \quad \text{and} \quad 0<r_{\rho(1)}<r_{\rho(2)}< \ldots <
r_{\rho(n)} .
\]
Hence,
\begin{align*}
n! \| f_n(\cdot,\cdot,t,x)\|^2_{\mathcal{H}^{\otimes n}} 
&\leq  \frac{C^n}{n!}\int_{[0,t]^{2n}} \prod_{i=1}^n \lambda(s_i-r_i) (\phi(s)\phi(r))^\ell    \ud s \ud r\\
&\leq  \frac{C^n}{n!} \frac{1}{2} \int_{[0,t]^{2n}} \prod_{i=1}^n \lambda(s_i-r_i) \left(\phi(s) ^{2\ell} + \phi(r)^{2\ell} \right)  \ud s \ud r\\
&=  \frac{C^n}{n!} \int_{[0,t]^{2n}} \prod_{i=1}^n \lambda(s_i-r_i) \phi(s) ^{2\ell} \ud s \ud r\\
&\leq \frac{C^nC_t^n}{n!} \int_{[0,t]^n} \phi(s)^{2\ell} \ud s\\
&=  C^nC_t^n  \int_{T_n(t)} \phi(s)^{2\ell} \ud s\\
&=  \frac{C^n C_t^n \Gamma(2\ell +1)^n t^{(2\ell+1)n}}{\Gamma((2\ell+1)n+1)}\,,
\end{align*}
where $C_t= 2\int_0^t \lambda(r)dr$. The last step is by Lemma \ref{stint}.

Therefore, 
\[
n! \| f_n(\cdot,\cdot,t,x)\|^2_{\mathcal{H}^{\otimes n}}\leq  \frac {C^nC_t^n}{\Gamma((2\ell+1)n+1)}\:,
\]
and 
$\sum_{n\ge 0} n! \| f_n(\cdot,\cdot,t,x)\|^2_{\mathcal{H}^{\otimes n}}$ converges if $\ell>-1/2$.

\

Next we need to show $$\ell>-1/2  \iff  |H|>d-2+\frac{1}{\alpha}\,.$$

Firstly by definition of $\ell$, \eqref{de.l}
$$\ell>-1/2\iff |H|>-\frac1\alpha-\kappa_d-\frac{2}{\alpha}\zeta_d.$$

Then using the definition of $\zeta_d$ and $\kappa_d$ in (4.2), (4.3) of \cite{HuHu15} for $1/2<\alpha<1$ and \eqref{zeta2}, \eqref{kappa2} for $1<\alpha<2$, we have: 

when $1/2<\alpha<1$,
\begin{equation*}
\frac1\alpha-\kappa_d-\frac{2}{\alpha}\zeta_d =
\begin{cases} -1+\frac1\alpha  ,  &  d=1 ;  \\
      \frac1\alpha   ,  &  d=2 ;  \\
       \frac1\alpha+2 ,  & d=4;  \\
     \frac1\alpha-2+d  ,  &  d=3\ {\rm or}\ d\geq 5\,;
  \end{cases}
  \end{equation*}

when $1<\alpha<2$,
\begin{equation*}
\frac1\alpha-\kappa_d-\frac{2}{\alpha}\zeta_d =
\begin{cases} -1+\frac1\alpha  ,  &  d=1 ;  \\
       d-2+\frac{1}{\alpha} ,  &  d\geq 2\,;
  \end{cases}
  \end{equation*}

For case $2H_i-2+\kappa_d/d= -1$, 
 applying Lemma \ref{basic.ineq1} to \eqref{theta.int},
we have 
$$ \prod_{i=1}^d\int_{\mathbb{R}^{2n}}\prod_{k=1}^n\varphi_{H_i}(y_{ik}-z_{ik}) \Theta_{n}(t, y_{ik}, s)\Theta_{n}(t, z_{ik}, s)dy_idz_i \leq \prod_{k=1}^nC^n(s_{\sigma(k+1)}-s_{\sigma(k)})^{2\ell'},$$
where
\begin{equation*}\label{de.l2}
\ell' = \zeta_{d} + \frac{d\epsilon+\kappa_d+d }{4}\alpha  \quad {\rm with}\quad
|H|=\sum_{i=1}^d H_i\,.
\end{equation*}

Using the relation $2H_i-2+\kappa_d/d= -1$, we have
$$\ell'= \ell + \frac{d\alpha}{4}\varepsilon.$$
Since
$$|H|>d-2+\frac{1}{\alpha}  \implies \ell>-1/2\,,$$
we can choose $\varepsilon$ big enough such such
$$|H|>d-2+\frac{1}{\alpha} \implies \ell'>-1/2 \,.$$
\

Lastly, when $\alpha \in(1/2, 1)$, for $d\leq 4, H_i>1/2$ implies condition \eqref{condtion.h2}; for $d>4$, condition \eqref{condtion.h2} is implied by
 $$H_i>1-\frac{2}{d}-\frac{\gamma}{2d} $$ with $\gamma_0$ sufficiently small;
 when $\alpha \in(1, 2)$ for $d=1, H_i>1/2$ implies \eqref{condtion.h2}; for $d\geq 2$, \eqref{condtion.h2} is implied by
 $$H_i>1-\frac{2}{d} $$ with $\nu_0$ sufficiently small.
This completes the proof of Theorem \ref{main.theorem} for case of $\Lambda(\cdot)=\varphi_{H}(\cdot)$

\

(ii) Let $x=(x_1, x_2, \cdots, x_d)\in\RR^d$. For Reisz kernel, notice that 
$$|x|^{-\kappa}\leq C\prod_{i=1}^d|x_i|^{\frac{\kappa}{d}},$$ so this case is reduced to case (i) with $H_i=(-\frac{\kappa}{d}+2)\frac{1}{2}, \: i=1, 2, \cdots, d$.
\

Correspondingly
$$|H|>d-2+\frac{1}{\alpha} $$ is 
$$\kappa< 4-2/\alpha,$$ which also guarantees condition \eqref{condtion.h2} .

\

For Bessel kernel,
applying Lemma \ref{basic.ineq2} for $\ud y_{\sigma(i)}\ud z_{\sigma(i)}$ in the order of  $i=1, 2, \cdots, n$ to  
$$\int_{\mathbb{R}^{2nd}}\ud y\ud z\: g_n(s, y, t, x)g_n(s, z, t, x)\prod_{i=1}^n\Lambda(y_i-z_i)$$

yields 

$$\int_{\mathbb{R}^{2nd}}\ud y\ud z\: g_n(s, y, t, x)g_n(s, z, t, x)\prod_{i=1}^n\Lambda(y_i-z_i)\leq \prod_{k=1}^nC^n(s_{\sigma(k+1)}-s_{\sigma(k)})^{2\ell},$$
where
\[
\ell:=\zeta_d-\frac{\alpha}{4}\kappa+\frac{\alpha}{2}\kappa_d+\frac{\alpha}{2}d
\]

As in case (i), $\sum_{n\ge 0} n! \| f_n(\cdot,\cdot,t,x)\|^2_{\mathcal{H}^{\otimes n}}$ converges if $\ell>-1/2$. Then using the definition of  $\zeta_d$ and $\kappa_d$ in (4.2), (4.3) of \cite{HuHu15} for $1/2<\alpha<1$ and \eqref{zeta2}, \eqref{kappa2} for $1<\alpha<2$, we have
$$\ell>-1/2\iff\kappa< 4-2/\alpha.$$
This finishes the the proof of the theorem.
\end{proof}

% Chapter

%%%
%  Appendices here.
%  Avoid symbols or mathematics in section and chapter headings.
%%%

\bibliographystyle{plain}
%%%

%\bibliography{dissertation}
%%%

%\printindex                  %---optional---if you want an index.

\end{document}